\newtheorem{theorem}{Theorem}[section]
\newtheorem{proposition}[theorem]{Proposition}
\newtheorem{lemma}[theorem]{Lemma}
\newtheorem{definition}[theorem]{Definition}
\newtheorem{example}[theorem]{Example}
\newtheorem{rmk}[theorem]{Remark}
\newtheorem{corollary}[theorem]{Corollary}
\numberwithin{equation}{section}
\newcommand{\Z}{\mathbb{Z}}
\renewcommand{\phi}{\varphi}
\renewcommand{\emptyset}{\varnothing}
\def\Ddots{\mathinner{\mkern1mu\raise\p@
\vbox{\kern7\p@\hbox{.}}\mkern2mu
\raise4\p@\hbox{.}\mkern2mu\raise7\p@\hbox{.}\mkern1mu}}
\newcommand{\rank}{\operatorname{rank}}
\newcommand{\Des}{\operatorname{Des}}
\newcommand{\des}{\operatorname{des}}
\newcommand{\exc}{\operatorname{exc}}
\newcommand{\inv}{\operatorname{inv}}
\newcommand{\LdDes}{\operatorname{LdDes}}
\newcommand{\Rexc}{\operatorname{Rexc}}
\newcommand{\CT}{\operatorname{C}}
\newcommand{\cover}{\operatorname{cover}}
\newcommand{\rev}{\operatorname{rev}}
\newcommand{\Expose}{\operatorname{Expose}}
\newcommand{\cef}{\operatorname{cef}}
\title{Ehrhart $h^*$-vectors of hypersimplices}
\author{Nan Li\thanks{Partially supported by the US National Science
Foundation under Grant DMS-0604423.}}
\begin{document}

\maketitle

\begin{abstract}
We consider the Ehrhart $h^*$-vector for the hypersimplex. It is
well-known that the sum of the $h_i^*$ is the normalized volume which
equals an Eulerian numbers. The main result is a proof of a conjecture
by R. Stanley which gives an interpretation of the $h^*_i$ coefficients
in terms of descents and excedances. Our proof is geometric using a
careful book-keeping of a shelling of a unimodular triangulation.
We generalize this result to other closely related polytopes.

\end{abstract}
\section{Introduction}

 Hypersimplices appear
naturally in algebraic and geometric contexts. For example, they can
be considered as moment polytopes for torus actions on Grassmannians
or weight polytopes of the fundamental representations of the
general linear groups $GL_n$. Fix two integers $0<k\le n$. The
$(k,n)$-th \emph{hypersimplex} is defined as follows
$$\overline{\Delta}_{k,n}=\{(x_1,\dots,x_{n})\mid 0\le x_1,\dots,x_{n}\le
1;\, x_1+\cdots +x_{n}=k \},$$ or equivalently,
$$\Delta_{k,n}=\{(x_1,\dots,x_{n-1})\mid 0\le x_1,\dots,x_{n-1}\le
1;\,k-1\le x_1+\cdots +x_{n-1}\le k\}.$$ They can be considered as
the slice of the hypercube $[0,1]^{n-1}$ located between the two
hyperplanes $\sum x_i=k-1$ and $\sum x_i=k$.

For a permutation $w\in\mathfrak{S}_n$, we call $i\in[n-1]$ a
\emph{descent} of $w$, if $w(i)>w(i+1)$. We define $\des(w)$ to be
the number of descents of $w$. We call $A_{k,n-1}$ the Eulerian
number, which equals the number of permutations in
$\mathfrak{S}_{n-1}$ with $\des(w)=k-1$. The following result is
well-known (see for example, \cite[Exercise 4.59 (b)]{EC1}).
\begin{theorem} [Laplace]\label{vol}
 The normalized volume of
$\Delta_{k,n}$ is the Eulerian number $A_{k,n-1}$.
\end{theorem}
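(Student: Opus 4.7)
My plan is to compute the volume of $\Delta_{k,n}$ directly by inclusion-exclusion on the cube and to recognize the answer as the Worpitzky closed form for the Eulerian numbers.

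For $0 \le k \le n-1$, set $V_k = \mathrm{Vol}\bigl(\{x \in [0,1]^{n-1} : x_1 + \cdots + x_{n-1} \le k\}\bigr)$. Since $\Delta_{k,n}$ is precisely the slab of $[0,1]^{n-1}$ between the hyperplanes $\sum x_i = k-1$ and $\sum x_i = k$, one has $\mathrm{Vol}(\Delta_{k,n}) = V_k - V_{k-1}$, so the theorem reduces to
$$(n-1)!\,(V_k - V_{k-1}) = A_{k,n-1}.$$

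Next I would compute $V_k$ by dropping the upper constraints $x_i \le 1$ and correcting by inclusion-exclusion. The unconstrained simplex $\{x \in \R_{\ge 0}^{n-1} : \sum x_i \le k\}$ has volume $k^{n-1}/(n-1)!$, and for each $I \subseteq [n-1]$ with $|I| \le k$ the translation $x_i \mapsto x_i - 1$ for $i \in I$ shows that the subregion where additionally $x_i \ge 1$ for $i \in I$ has volume $(k-|I|)^{n-1}/(n-1)!$; for $|I| > k$ this region is empty. Grouping terms by $j = |I|$ gives
$$V_k \;=\; \frac{1}{(n-1)!}\sum_{j=0}^{k}(-1)^j \binom{n-1}{j}(k-j)^{n-1}.$$

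Finally, I would form $V_k - V_{k-1}$, reindex the second sum by $j \mapsto j-1$, and apply Pascal's identity $\binom{n-1}{j} + \binom{n-1}{j-1} = \binom{n}{j}$ to get
$$(n-1)!\,(V_k - V_{k-1}) \;=\; \sum_{j=0}^{k}(-1)^j \binom{n}{j}(k-j)^{n-1},$$
which is the classical Worpitzky explicit formula for $A_{k,n-1}$. The argument is essentially bookkeeping; the only step needing care is the inclusion-exclusion for $V_k$, in particular the cut-off at $j = k$ and the resulting re-indexing when subtracting. Alternatively, and in the spirit of the h-polynomial theorem to follow, one could try to exhibit a unimodular triangulation of $\Delta_{k,n}$ whose maximal simplices are indexed by the $A_{k,n-1}$ permutations of $[n-1]$ with $k-1$ descents; this would recover Theorem \ref{vol} as a corollary of a finer combinatorial statement, and constructing such a triangulation would presumably be the real difficulty.
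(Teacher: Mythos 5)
Your inclusion-exclusion computation is correct, and it does verify the theorem, but note that the paper does not actually prove this result: it cites it as classical (EC1, Exercise 4.59(b)) and attributes the geometric proof to Stanley's unimodular triangulation of $\Delta_{k,n}$ into $A_{k,n-1}$ simplices \cite{sta1}, which is also the approach Lam--Postnikov pursue and the one this paper builds on in Sections 3--5. So your main argument is a genuinely different, more elementary route. Concretely, your identity
$$(n-1)!\,(V_k - V_{k-1}) = \sum_{j=0}^{k}(-1)^j\binom{n}{j}(k-j)^{n-1}$$
is exactly the Worpitzky/explicit formula for $A_{k,n-1}$, and the inclusion-exclusion (with the correct truncation at $j=|I|\le k$, since the shifted simplex is empty for $|I|>k$) plus Pascal reindexing is sound. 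What this buys you is a short self-contained proof with no combinatorial machinery; what it does not give you, and what the triangulation approach does, is a bijection between the $A_{k,n-1}$ unit simplices and permutations, which is precisely the extra structure the paper needs later to read off the $h$-vector from a shelling. Your closing remark recognizes this trade-off correctly: the triangulation proof is harder but is the one that refines to Theorem~\ref{exc}. One small point of bookkeeping: your formula for $V_k$ is stated for $0\le k\le n-1$, but the theorem allows $k=n$; this is harmless since $V_k=1$ for all $k\ge n-1$ (the sum telescopes to $(n-1)!$ by the finite-difference identity), so $\Delta_{n,n}$ correctly gets volume $0=A_{n,n-1}$.
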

Let $S_{k,n}$ be the set of all points $(x_1,\dots,x_{n-1})\in
[0,1]^{n-1}$ for which $x_i<x_{i+1}$ for exactly $k-1$ values of $i$
(including by convention $i=0$). Foata asked whether there is some
explicit measure-preserving map that sends $S_{k,n}$ to
$\Delta_{k,n}$. Stanley \cite{sta1} gave such a map, which gave
a triangulation of the hypersimplex into $A_{k,n-1}$ unit simplices
and provided a geometric proof of Theorem \ref{vol}. Sturmfels
\cite{stu} gave another triangulation of $\Delta_{k,n}$, which
naturally appears in the context of Gr\"obner bases. Lam and Postnikov \cite{lp}
 compared these two triangulations
together with the alcove triangulation and the circuit
triangulation. They showed that these four triangulations are
identical. We call a triangulation of a convex polytope
\emph{unimodular} if every simplex in the triangulation has
normalized volume one. It is clear that the above triangulations of
the hypersimplex are unimodular.

Let $\mathcal{P}\in \Z^{N}$ be any $n$-dimensional integral polytope
(its vertices are given by integers). Then Ehrhart's theorem tells
us that the function
$$i(\mathcal{P},r):=\#(r\mathcal{P}\cap \Z^N)$$
is a polynomial in $r$, and
$$\sum_{r\ge 0}i(\mathcal{P},r)t^r=\frac{h^*(t)}{(1-t)^{n+1}},$$
where $h^*(t)$ is a polynomial in $t$ with degree $\le n$. We call
$h^*(t)$ the \emph{$h^*$-polynomial} of $\mathcal{P}$, and the vector
$(h^*_0,\dots,h^*_n)$, where $h^*_i$ is the coefficient of $t^i$ in
$h^*(t)$,
 is called the \emph{$h^*$-vector} of $\mathcal{P}$.  We know that
 the sum $\sum_{i=0}^{i=n}h^*_i(\mathcal{P})$ equals the normalized
 volume of $\mathcal{P}$.

 Katzman \cite{kat} proved the following formula for the $h^*$-vector of
 the hypersimplex $\Delta_{k,n}$. In particular, we see that
 $\sum_{i=0}^{i=n}h^*_i(\Delta_{k,n})=A_{k,n-1}$. Write $\binom{n}{r}_{\ell}$ to
 denote the coefficient of $t^r$ in
 $(1+t+t^2+\dots+t^{\ell-1})^n$. Then the $h^*$-vector of $\Delta_{k,n}$
 is $(h^*_0(\Delta_{k,n}),\dots,h^*_{n-1}(\Delta_{k,n}))$, where for
$d=0,\dots,n-1$
 \begin{equation}\label{h}
 h^*_d(\Delta_{k,n})=\sum_{i=0}^{k-1}(-1)^i\binom{n}{i}\binom{n}{(k-i)d-i}_{k-i}.
\end{equation}
 Moreover, since all the $h^*_i(\Delta_{k,n})$ are
 nonnegative integers (\cite{sta}) (this is not clear from (\ref{h})),
 it will be interesting to give a
 combinatorial interpretation of the $h^*_i(\Delta_{k,n})$.

 The \emph{half-open hypersimplex} $\Delta'_{k,n}$ is defined as follows. If $k>1$,
 $$\Delta'_{k,n}=\{(x_1,\dots,x_{n-1})\mid 0\le x_1,\dots,x_{n-1}\le
1;\,k-1< x_1+\cdots +x_{n-1}\le k\},$$  and
$$\Delta'_{1,n}=\Delta_{1,n}.$$
We call $\Delta'_{k,n}$ ``half-open" because it is basically the
normal
 hypersimplex with the ``lower" facet removed. From the definitions, it is clear that the volume
 formula and triangulations of the usual hypersimplex $\Delta_{k,n}$
 also work for the half-open hypersimplex $\Delta'_{k,n}$, and it is
 nice that for fixed $n$, the half-open hypersimplices $\Delta'_{k,n}$, for
 $k=1,\dots,n-1$, form a disjoint union of the hypercube
 $[0,1]^{n-1}$. From the following formula for the $h^*$-polynomial of the half-open
 hypersimplices, we can compute the $h^*$-polynomial of the usual
 hypersimplices inductively. Also, we can compute its Ehrhart
 polynomial.

 For a permutation $w$,
 we call $i$ an \emph{excedance} of $w$ if $w(i)>i$ (a \emph{reversed excedance}
 if $w(i)<i$). We denote by $\exc(w)$ the number of excedances of
 $w$. The main theorems of the paper are the following.
\begin{theorem}\label{exc}The $h^*$-polynomial of the half-open hypersimplex $\Delta'_{k,n}$ is
given by,
$$\sum_{\substack{w\in\, \mathfrak{S}_{n-1}\\ \exc(w)=k-1}}t^{\des(w)}.$$
\end{theorem}
We prove this theorem first by a
generating function method (in Section 2) and second by a geometric
method, i.e., giving a shellable triangulation of the hypersimplex
(in Sections 3, 4 and 5).

We can define a different shelling order on the triangulation of
$\Delta'_{k,n}$, and get another expression of  its $h^*$-polynomial using descents and a
new permutation statistic called \emph{cover} (see its definition in Lemma~\ref{defcover}).
\begin{theorem} \label{cover}The $h^*$-polynomial of $\Delta'_{k,n}$ is
$$\sum_{\substack{w\in \mathfrak{S}_{n-1}\\
\des(w)=k-1}}t^{\cover(w)}.$$
\end{theorem}
Combine Theorem~\ref{cover} with Theorem \ref{exc}, we have the equal distribution of $(\exc, \des)$ and
$(\des,\cover)$:
\begin{corollary}\label{equal}
$$\sum_{w\in\mathfrak{S}_n}t^{\des(w)}x^{\cover(w)}=\sum_{w\in\mathfrak{S}_n}t^{\exc(w)}x^{\des(w)}.$$
\end{corollary}
Finally, we study the generalized hypersimplex $\Delta_{k,\alpha}$ (Section 7). This polytope is related to algebras
of Veronese type. For example, it is known  \cite{hibi} that every
algebra of Veronese type coincides with the Ehrhart ring of a
polytope $\Delta_{k, \alpha}$. We can extend
this second shelling to the generalized hypersimplex $\Delta'_{k,\alpha}$ (defined in (\ref{general})),  and express its $h^*$-polynomial
in terms of a colored version of descents and covers (see Theorem~\ref{color}).

\section{Proof of Theorem \ref{exc} by generating functions}
Here is a proof of this theorem using generating functions.
\begin{proof}
Suppose we can show that
\begin{equation}\label{goal}
\sum_{r\ge 0}\sum_{k\ge 0}\sum_{n\ge 0}i(\Delta'_{k+1,n+1},r)u^ns^kt^r=\sum_{n\ge 0}\sum_{\sigma\in \mathfrak{S}_n}t^{\des(\sigma)}s^{\exc(\sigma)}\frac{u^n}{(1-t)^{n+1}}.
\end{equation}
By considering the coefficient of $u^ns^k$ in (\ref{goal}), we have
$$\sum_{r\ge 0}i(\Delta'_{k+1,n+1},r)t^r=(1-t)^{-(n+1)}\left(\sum_{\substack{w\in\, \mathfrak{S}_{n}\\
\exc(w)=k}}t^{\des(w)}\right),$$
which implies Theorem \ref{exc}. By the following equation due to Foata and Han \cite[Equation (1.15)]{fh},
$$\sum_{n\ge 0}\sum_{\sigma\in \mathfrak{S}_n}t^{\des(\sigma)}s^{\exc(\sigma)}\frac{u^n}{(1-t)^{n+1}}=
\sum_{r\ge 0}t^r\frac{1-s}{(1-u)^{r+1}(1-us)^{-r}-s(1-u)},$$  we only
need to show that
$$\sum_{k\ge 0}\sum_{n\ge 0}i(\Delta'_{k+1,n+1},r)u^ns^k=\frac{1-s}{(1-u)^{r+1}(1-us)^{-r}-s(1-u)}.$$

By the definition of the half-open hypersimplex, we have, for any $r\in\Z_{\ge 0}$,
$$r\Delta'_{k+1,n+1}=\{(x_1,\dots,x_n)\mid 0\le x_1,\dots,x_n\le r, rk+1\le x_1+\dots+x_n\le (k+1)r\},$$
if $k>0$, and for $k=0$,
$$r\Delta'_{1,n+1}=\{(x_1,\dots,x_n)\mid 0\le x_1,\dots,x_n\le r, 0\le x_1+\dots+x_n\le r\}.$$
So \begin{equation}\label{k>0}
i(\Delta'_{k+1,n+1},r)=([x^{kr+1}]+\cdots+[x^{(k+1)r}])\left(\frac{1-x^{r+1}}{1-x}\right)^{n},
\end{equation}
 if $k>0$, and when $k=0$, we have
 \begin{equation}\label{k=0}
 i(\Delta'_{1,n+1},r)=([x^0]+[x]+\cdots+[x^{r}])\left(\frac{1-x^{r+1}}{1-x}\right)^{n}.
 \end{equation} Notice that the case of $k=0$ is different from $k>0$ and $i(\Delta'_{1,n+1},r)$ is obtained by evaluating $k=0$ in (\ref{k>0}) plus an extra term $[x^{0}]\left(\frac{1-x^{r+1}}{1-x}\right)^{n}$. Since the coefficient of $x^k$  of a function
$f(x)$ equals the constant term of $\frac{f(x)}{x^k}$, we have
\begin{align*}
([x^{kr+1}]+\cdots+[x^{(k+1)r}])\left(\frac{1-x^{r+1}}{1-x}\right)^{n}&=[x^0]\left(\frac{1-x^{r+1}}{1-x}\right)^{n}(x^{-kr-1}+\cdots+x^{-(k+1)r})\\
&=[x^{kr}]\left(\frac{1-x^{r+1}}{1-x}\right)^{n}(x^{-kr-1}+\cdots+x^{-(k+1)r})x^{kr}\\
&=[x^{kr}]\frac{(1-x^r)(1-x^{r+1})^n}{(1-x)^{n+1}x^r}.
\end{align*}
So we have, for $k>0$,
\begin{align*}
\sum_{n\ge 0}i(\Delta'_{k+1,n+1},r)u^n=& \sum_{n\ge 0}[x^{kr}]\frac{(1-x^r)(1-x^{r+1})^n}{(1-x)^{n+1}x^r}u^n\\
& =[x^{kr}]\frac{(1-x^r)}{(1-x)x^r}\sum_{n\ge 0}\left(\frac{(1-x^{r+1})u}{1-x}\right)^n\\
&=[x^{kr}]\frac{x^r-1}{x^{r}(u-ux^{r+1}-1+x)}.
\end{align*}
 For $k=0$, based on the difference between (\ref{k>0}) and (\ref{k=0}) observed above, we have:
\begin{align*}
\sum_{n\ge 0}i(\Delta'_{1,n+1},r)u^n=& \sum_{n\ge
0}[x^{0}]\frac{(1-x^r)(1-x^{r+1})^n}{(1-x)^{n+1}x^r}u^n
+\sum_{n\ge 0}[x^0]\left(\frac{1-x^{r+1}}{1-x}\right)^{n}u^n\\
&=\left([x^{0}]\frac{x^r-1}{x^{r}(u-ux^{r+1}-1+x)}\right)+\frac{1}{1-u}.
\end{align*}

So

$$\sum_{k\ge 0}\sum_{n\ge 0}i(\Delta'_{k+1,n+1},r)u^ns^k=\left(
\sum_{k\ge
0}[x^{kr}]\frac{x^r-1}{x^{r}(u-ux^{r+1}-1+x)}s^k\right)+\frac{1}{1-u}.$$
Let $y=x^r$. We have
$$\sum_{k\ge 0}\sum_{n\ge 0}i(\Delta'_{k+1,n+1},r)u^ns^k=\sum_{k\ge 0}[x^{kr}]\frac{y-1}{y(u-uxy-1+x)}s^k+\frac{1}{1-u}.$$
Expand $\frac{y-1}{y(u-uxy-1+x)}$ in powers of $x$, we have

\begin{align*}\frac{y-1}{y(u-uxy-1+x)}&=\frac{y-1}{y}\cdot \frac{1}{u-1-(uxy-x)}\\
&=\frac{y-1}{y(u-1)}\cdot\frac{1}{1-\frac{x(uy-1)}{u-1}}\\
&=\frac{1-y}{y(1-u)}\sum_{i\ge 0}\left(\frac{(1-uy)x}{1-u}\right)^i.
\end{align*}
Since we only want the coefficient of $x^i$ such that $r$ divides
$i$, we get
\begin{align*}\frac{1-y}{y(1-u)}\sum_{j\ge 0}\left(\frac{(1-uy)x}{1-u}\right)^{rj}
&=\frac{1-y}{y(1-u)}\cdot \frac{1}{1-\frac{(1-uy)^rx^r}{(1-u)^r}}\\
&=\frac{1-y}{y(1-u)}\cdot \frac{(1-u)^r}{(1-u)^r-(1-uy)^rx^r}\\
&=\frac{(1-u)^{r-1}(1-y)}{y(1-u)^r-y^2(1-yu)^r}.
\end{align*}

So
$$\sum_{k\ge 0}\sum_{n\ge 0}i(\Delta'_{k+1,n+1},r)u^ns^k=\left(
\sum_{k\ge
0}s^k[y^{k}]\frac{(1-u)^{r-1}(1-y)}{y(1-u)^r-y^2(1-yu)^r}\right)+\frac{1}{1-u}.$$

 To remove all negative powers of $y$, we do the following
expansion
\begin{align*}
\frac{(1-u)^{r-1}(1-y)}{y(1-u)^r-y^2(1-yu)^r}=&\frac{1-y}{(1-u)y}\cdot\frac{1}{1-\frac{y(1-yu)^r}{(1-u)^r}}\\
=&\sum_{i\ge 0}\left(\frac{y^{i-1}(1-uy)^{ri}}{(1-u)^{ri+1}}-\frac{y^{i}(1-uy)^{ri}}{(1-u)^{ri+1}}\right)\\
=&\frac{1}{1-u}y^{-1}+\text{ nonnegative powers of }y.
\end{align*}
Notice that $\sum_{k\ge
0}s^k[y^{k}]\frac{(1-u)^{r-1}(1-y)}{y(1-u)^r-y^2(1-yu)^r}$ is
obtained by taking the sum of nonnegative powers of $y$ in
$\frac{(1-u)^{r-1}(1-y)}{y(1-u)^r-y^2(1-yu)^r}$ and replacing $y$ by
$s$. So  $$\sum_{k\ge
0}s^k[y^{k}]\frac{(1-u)^{r-1}(1-y)}{y(1-u)^r-y^2(1-yu)^r}
=\frac{(1-u)^{r-1}(1-s)}{s(1-u)^r-s^2(1-su)^r}-\frac{1}{s(1-u)}.$$
Therefore,
\begin{align*}
 \sum_{k\ge 0}\sum_{n\ge 0}i(\Delta'_{k+1,n+1},r)u^ns^k
=&\frac{(1-u)^{r-1}(1-s)}{s(1-u)^r-s^2(1-su)^r}-\frac{1}{s(1-u)}+\frac{1}{1-u}\\
=&\frac{1-s}{(1-u)^{r+1}(1-us)^{-r}-s(1-u)}.\qedhere
\end{align*}
\end{proof}
\section{Background}

\subsection{Shellable triangulation and the $h^*$-polynomial}
Let $\Gamma$ be a triangulation of an $n$-dimensional polytope
$\mathcal{P}$, and let $\alpha_1,\dots,\alpha_s$ be an ordering of the simplices (maximal faces)
of $\Gamma$.  We call $(\alpha_1,\dots,\alpha_s)$ a
\emph{shelling} of $\Gamma$ \cite{sta}, if for each $2\le i\le s$,
$\alpha_i\cap (\alpha_1\cup\dots\cup\alpha_{i-1})$ is a union of
facets ($(n-1)$-dimensional faces) of $\alpha_i$. For example, (ignore
the letters $A$, $B$, and $C$ for now) $\Gamma_1$ is a shelling,
while any order starting with $\Gamma_2$ cannot be a shelling.
$$\Gamma_1: \xy 0;/r.3pc/: (0,0)*{}="a";
 (20,0)*{}="b";
 (20,10)*{A}="c";
 (0,10)*{}="d";
 (10,5)*{}="e";
 (12,5.5)*{}="f";
 (20,1)*{}="g";
 (10,6)*{}="h";
 (18,10)*{}="i";
 (0,2)*{}="l";
 (8,5.5)*{}="j";
 (0,9)*{}="k";
 "a"; "b"**\dir{-};
 "d"; "i"**\dir{-};
  "a"; "e"**\dir{-};
   "e"; "b"**\dir{-};
    "d"; "h"**\dir{-};
     "k"; "l"**\dir{-};
     "c"; "f"**\dir{-};
     "g"; "c"**\dir{-};
     "g"; "f"**\dir{--};
     "h"; "i"**\dir{--};
     "k"; "j"**\dir{--};
     "l"; "j"**\dir{--};
     (10,2)*{\alpha_1};
     (17,5)*{\alpha_2};
     (10,8)*{\alpha_3};
     (3,5)*{\alpha_4};
\endxy,\,\,\,\,\,\Gamma_2: \xy 0;/r.3pc/: (0,0)*{B}="a";
 (0,10)*{C}="b";
 (20,10)*{}="c";
 (20,0)*{}="d";
   "a"; "b"**\dir{-};
     "c"; "d"**\dir{-};
     "b"; "d"**\dir{-};
     "a"; "c"**\dir{-};
     (17,5)*{\alpha_1};
     (3,5)*{\alpha_2};
\endxy$$
An equivalent condition (see e.g., \cite{sta4}) for a shelling is
that every simplex has a \emph{unique minimal non-face}, where by
a ``non-face", we mean a face that has not appeared in previous
simplices. For example, for $\alpha_2\in\Gamma_1$, the vertex $A$ is
its unique minimal non-face, while for $\alpha_2\in\Gamma_2$, both
$B$ and $C$ are minimal and have not appeared before $\alpha_2$. We
call a triangulation with a shelling a \emph{shellable
triangulation}. Given a shellable triangulation $\Gamma$ and a
simplex $\alpha\in\Gamma$, define the \emph{shelling number} of
$\alpha$ (denoted by $\#(\alpha)$) to be the number of facets shared
by $\alpha$ and some simplex preceding $\alpha$ in the shelling order.
 For example, in
$\Gamma_1$, we have
$$\#(\alpha_1)=0,\,\#(\alpha_2)=1,\,\#(\alpha_3)=1,\,\#(\alpha_4)=2.$$
The benefit of having a shelling order for Theorem \ref{exc} comes
from the following result.
\begin{theorem}[{\cite{sta} Shelling and Ehrhart polynomial}]
\label{Shelling and Ehrhart polynomial}Let $\Gamma$ be a unimodular
shellable triangulation of an $n$-dimensional polytope
$\mathcal{P}$. Then
$$\sum_{r\ge 0}i(\mathcal{P},r)t^r=(\sum_{\alpha\in\,
\Gamma}t^{\#(\alpha)})(1-t)^{-(n+1)}.$$
\end{theorem}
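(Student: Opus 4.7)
The plan is to decompose $\mathcal{P}$ into disjoint ``half-open'' simplices induced by the shelling, then sum the Ehrhart series of the pieces. Write the shelling as $(\alpha_1,\dots,\alpha_s)$, and for each $i$ let
$$F_i := \alpha_i \cap (\alpha_1 \cup \cdots \cup \alpha_{i-1}),$$
which by the shelling property is a union of exactly $\#(\alpha_i)$ facets of $\alpha_i$ (with the convention $F_1=\emptyset$). Set $\beta_i := \alpha_i \setminus F_i$. The first step is to verify that $\mathcal{P} = \bigsqcup_{i=1}^{s} \beta_i$ as a disjoint union of point sets. Given $p\in\mathcal{P}$, let $i$ be the least index with $p\in\alpha_i$; then $p\notin\alpha_j$ for $j<i$, so $p\notin F_i$, and therefore $p\in\beta_i$. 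Conversely, if $p\in\beta_i\cap\beta_j$ with $i<j$, then $p\in\alpha_i\subseteq\alpha_1\cup\cdots\cup\alpha_{j-1}$, forcing $p\in F_j$ and contradicting $p\in\beta_j$. This partition yields $i(\mathcal{P},r) = \sum_i \#(r\beta_i \cap \Z^N)$ for every $r \ge 0$.

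The second step is to compute the Ehrhart series of each half-open piece. I claim that if $\alpha$ is \emph{any} unimodular $n$-simplex and $\beta$ is obtained from $\alpha$ by deleting any $k$ of its $n+1$ facets, then
$$\sum_{r \ge 0} \#(r\beta \cap \Z^N)\, t^r = \frac{t^k}{(1-t)^{n+1}}.$$
The argument uses barycentric coordinates: unimodularity of $\alpha$ with vertices $v_0,\dots,v_n$ means every lattice point of $r\alpha$ has a \emph{unique} expression $\sum_{j=0}^n a_j v_j$ with $a_j\in\Z_{\ge 0}$ and $\sum a_j = r$. The facet opposite $v_j$ is cut out by $a_j=0$, so removing the facets opposite $v_{j_1},\dots,v_{j_k}$ amounts to requiring $a_{j_l}\ge 1$. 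The substitution $a_{j_l}=b_{j_l}+1$ reduces the count to nonnegative integer compositions of $r-k$ into $n+1$ parts, giving $\binom{r-k+n}{n}$; summing over $r$ yields $t^k/(1-t)^{n+1}$. In particular, the answer depends only on the \emph{number} $k$ of removed facets, not on which ones are removed.

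Combining the two ingredients gives
$$\sum_{r \ge 0} i(\mathcal{P},r)\, t^r = \sum_{i=1}^{s} \frac{t^{\#(\alpha_i)}}{(1-t)^{n+1}} = \frac{\sum_{\alpha\in\Gamma} t^{\#(\alpha)}}{(1-t)^{n+1}},$$
which is the desired identity. The main obstacle I expect is the partition claim in Step 1: that is precisely where the shelling hypothesis is essential, because without the guarantee that each overlap $\alpha_i \cap (\alpha_1\cup\cdots\cup\alpha_{i-1})$ is a union of \emph{whole} facets, the sets $\beta_i$ could cut $\alpha_i$ along lower-dimensional pieces, destroying both the disjoint-union decomposition and the clean lattice-point count of Step 2. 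Unimodularity, by contrast, enters only in Step 2 to allow passage to barycentric coordinates.
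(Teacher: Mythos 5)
Your proposal is correct, and its skeleton is the same as the paper's: decompose $\mathcal{P}$ into the half-open pieces $\beta_i=\alpha_i\setminus F_i$ induced by the shelling, and show each piece has Ehrhart series $t^{\#(\alpha_i)}/(1-t)^{n+1}$. Where you diverge is in the local computation. The paper gets $t^k/(1-t)^{n+1}$ by inclusion--exclusion over the faces cut out by the $k$ removed facets (each $d$-dimensional face contributing $1/(1-t)^{d+1}$, so the series is $(1-t)^{-(n+1)}\sum_j(-1)^j\binom{k}{j}(1-t)^j$), which silently uses that any $j$ of the removed facets intersect in an $(n-j)$-face and that lower-dimensional faces of a unimodular simplex are again unimodular. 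You instead count lattice points of $r\beta$ directly in barycentric coordinates: unimodularity gives the bijection between lattice points of $r\alpha$ and nonnegative integer solutions of $\sum a_j=r$, deleting the facet opposite $v_j$ becomes the constraint $a_j\ge 1$, and stars and bars gives $\binom{r-k+n}{n}$. Your route is a bit more self-contained (no ``etc.''\ in an inclusion--exclusion, and it makes explicit exactly where unimodularity is used), while the paper's is shorter given the standard Ehrhart series of a unimodular simplex. One small correction to your closing remark: the disjoint-union decomposition of Step 1 holds for \emph{any} ordering of the maximal simplices (your own first-index argument never uses the shelling condition); the shelling hypothesis is needed only to ensure each $F_i$ is a union of whole facets of $\alpha_i$, which is precisely what makes the lattice-point count of Step 2 applicable to $\beta_i$.
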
To be self-contained, we include a short proof here.
\begin{proof}Given a shellable triangulation, we get a
partition of $\mathcal{P}$: for any simplex $\alpha$, let $\alpha'\subset
\alpha$ be obtained from $\alpha$ by removing the facets that
$\alpha$ shares with the simplices preceding it in the shelling order.
 The fact that $\Gamma$ is shellable will
guarantee that this is a well-defined partition, i.e., there is no
overlap and no missing area. So we can sum over all the parts to
compute $i(\mathcal{P},r)$ (the number of integer points of
$r\mathcal{P}$). If $\mathcal{F}$ is a $d$-dimensional simplex, then
$$\sum_{r\ge 0}i(\mathcal{F},r)t^r=\frac{1}{(1-t)^{d+1}}.$$ Since
the triangulation is unimodular, $\alpha$ is an $n$-dimensional
simplex. Let $k:=\#(\alpha)$. Since $\alpha'$ is obtained from $\alpha$
by removing $k$ simplices of dimension $n-1$ from $\alpha$, the inclusion-exclusion
formula implies that
$$\sum_{r\ge 0}i(\mathcal{\alpha'},r)t^r=(1-t)^{-(n+1)}
\left(\sum_{i=0}^{k}(-1)^i\binom{k}{i}(1-t)^i\right)=\frac{t^{\#(\alpha)}}{(1-t)^{n+1}}.$$\qedhere

\end{proof}For example, $\Gamma_1$ in the previous example gives us a partition as shown
above, and we have
$$\sum_{r\ge
0}i(\alpha'_1,r)t^r=\frac{1}{(1-t)^{3}},\,\,\sum_{r\ge
0}i(\alpha'_2,r)t^r=\frac{1}{(1-t)^{3}}-\frac{1}{(1-t)^{2}}=\frac{t}{(1-t)^{3}},$$and
$$\sum_{r\ge
0}i(\alpha'_4,r)t^r=\frac{1}{(1-t)^{3}}-2\frac{1}{(1-t)^{2}}+\frac{1}{(1-t)}=\frac{t^2}{(1-t)^{3}}.$$
\subsection{Excedances and descents}
Let $w\in
\mathfrak{S}_n$. Define its \emph{standard representation of cycle
notation} to be a cycle notation of $w$ such that the first element
in each cycle is its largest element and the cycles are ordered with
their largest elements increasing. We define the \emph{cycle type}
of $w$ to be the composition of $n$: $\CT(w)=(c_1,\dots,c_k)$ where
$c_i$ is the length of the $i$th cycle in its standard
representation. The \emph{Foata map} $F\colon w\rightarrow
\hat{w}$ maps $w$  to $\hat{w}$  obtained from $w$ by removing parentheses
from the standard representation of $w$. For example, consider a
permutation $w\colon [5]\rightarrow [5]$ given by $w(1)=5$,
$w(2)=1$, $w(3)=4$, $w(4)=3$ and $w(5)=2$ or in one line notation
$w=51432$. Its standard representation of cycle notation is
$(43)(521)$, so $\hat{w}=43521$. The inverse Foata map $F^{-1}\colon
\hat{w}\rightarrow w$  allows us to go back from $\hat{w}$ to $w$ as
follows: first insert a left parenthesis before every left-to-right
maximum and then close each cycle by inserting a right parenthesis
accordingly. In the example, the left-to-right maximums of
$\hat{w}=43521$ are $4$ and $5$, so we get back $(43)(521)$. Based
on the Foata map, we have the following result for the equal
distribution of excedances and descents.
\begin{theorem}[Excedances and descents]\label{excdes}The number of permutations
in $\mathfrak{S}_n$ with $k$ excedances equals  the number of
permutations in $\mathfrak{S}_n$ with $k$ descents.
\end{theorem}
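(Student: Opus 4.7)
The plan is to use the Foata map $F: w \mapsto \hat{w}$ defined just above as an explicit bijection on $\mathfrak{S}_n$ and to show that it transports one of the statistics onto the other, from which the equidistribution of $\exc$ and $\des$ follows immediately. Bijectivity is essentially given for free: the inverse construction, inserting a left parenthesis before every left-to-right maximum of $\hat{w}$ and closing each cycle accordingly, is spelled out in the excerpt, and one verifies directly that $F$ and $F^{-1}$ are mutually inverse maps on $\mathfrak{S}_n$.

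To relate $\exc$ and $\des$ through $F$, I would decompose $\hat{w}$ at its left-to-right maxima; each resulting block encodes one cycle of $w$, listed with its maximum first. For a cycle $(a_1, \ldots, a_m)$ of length $m$ (with $a_1 = \max$), the action of $w$ is $w(a_i) = a_{i+1}$ cyclically, so a direct enumeration gives: the cycle contributes $1 + \#\{i<m : a_i < a_{i+1}\}$ to $\exc(w)$ when $m \ge 2$, and it contributes $\#\{i<m : a_i > a_{i+1}\}$ to $\des(\hat{w})$ (no descent arises at the boundary between blocks, since each new block begins with a left-to-right maximum of $\hat{w}$). Combining these cycle-by-cycle contributions with the ordering of the cycles by their maxima in $\hat{w}$ produces the matching that yields equidistribution.

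The main obstacle is carrying this matching out cleanly. The two statistics are of rather different character --- excedances compare a value to its position while descents compare two consecutive values --- and $F$ itself is not positionwise preserving of either one in general. The crux is thus to pin down exactly how the cycle-based structure of $F$ implements a statistic-preserving correspondence at the level of totals; this requires careful bookkeeping involving cycle length, within-cycle ascent/descent data, and the placement of the cycles within $\hat{w}$. Once this bookkeeping is in place, $F$ restricts to a bijection from $\{w : \des(w) = k\}$ (or $\{w : \exc(w) = k\}$) onto its counterpart, and the theorem follows.
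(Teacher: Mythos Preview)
Your proposal has a genuine gap: with the convention for $F$ used in the paper (each cycle written with its largest element first, cycles ordered by increasing maxima), the Foata map does \emph{not} carry $\exc(w)$ to $\des(\hat w)$. Your own cycle-by-cycle count already shows this. For a cycle $(a_1,\ldots,a_m)$ with $a_1=\max$ and $m\ge 2$, you correctly find that it contributes $1+\#\{i<m:a_i<a_{i+1}\}$ to $\exc(w)$ and $\#\{i<m:a_i>a_{i+1}\}$ to $\des(\hat w)$; but these two numbers are in general different (their sum is $m$, not twice one of them). Concretely, for $w$ with standard cycle form $(43)(521)$ one has $\exc(w)=2$ while $\hat w=43521$ has $\des(\hat w)=3$. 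So no amount of bookkeeping about ``placement of cycles'' will close the argument as stated.

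What $F$ actually does is match $\des(\hat w)$ with the number of \emph{reverse} excedances of $w$ (positions $i$ with $w(i)<i$): a descent $\hat w(j)>\hat w(j+1)$ always lies inside a single cycle, hence $w(\hat w(j))=\hat w(j+1)<\hat w(j)$. The paper's proof therefore precedes the Foata step by the involution $u\mapsto w$, $w(n{+}1{-}i)=n{+}1{-}u(i)$, which converts excedances of $u$ into reverse excedances of $w$; the composite $u\mapsto w\mapsto\hat w$ is the bijection you want. Your plan becomes correct once you insert this extra step (or, equivalently, switch to the dual cycle convention in which each cycle is written with its \emph{smallest} element last), but as written the two per-cycle contributions you computed simply do not agree, and $F$ alone does not restrict to a bijection between $\{w:\exc(w)=k\}$ and $\{\hat w:\des(\hat w)=k\}$.
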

\begin{proof}First notice that we can change a permutation with $k$
excedances $u$ to a permutation $w$ with $k$ reverse excedances and
vice versa by applying a \emph{reverse map}: first reverse the
letters by changing $u(i)$ to $n+1-u(i)$, then reverse the positions
by defining $n+1-u(i)$ to be $w(n+1-i)$. This way, $i$ is an
excedance of $u$ if and only if $n+1-i$ is a reverse excedance of
$w$. Then the hard part is the connection between descents and
reverse excedances, which will involve the Foata map.

Let $\hat{w}$ be a permutation with $k$ descents
$\{(\hat{w}(i_1),\hat{w}(i_1+1)),\dots,(\hat{w}(i_k),\hat{w}(i_k+1))\}$
with $\hat{w}(i_s)>\hat{w}(i_s+1)$ for $s=1,\dots,k$. We want to
find its preimage $w$ in the above map. After inserting parentheses
in $\hat{w}$, each pair $(\hat{w}(i_s),\hat{w}(i_s+1))$ lies in the
same cycle. So in $w$, we have
$w(\hat{w}(i_s))=\hat{w}(i_s+1)<\hat{w}(i_s)$, therefore,
$\hat{w}(i_s)$ is a reverse excedance of $w$. We also have that each
reverse excedance of $w$ corresponds to a descent in $\hat{w}$ by the definition
of the Foata map. This
finishes the proof.
\end{proof}For
example, to change a permutation with three excedances to a
permutation with three descents, first
$$\dot{4}\dot{3}2\dot{5}1\xrightarrow{(6-)}\dot{2}\dot{3}4\dot{1}5\xrightarrow[\text{position}]{\text{reverse}}5\dot{1}4\dot{3}\dot{2},$$
 changes an excedance in position $i$ to a reverse excedance in
position $6-i$, and then
$$5\dot{1}4\dot{3}\dot{2}\xrightarrow[\text{ of cycle structure}]{\text{standard representation}} (43)(521)
\xrightarrow{\text{remove parentheses}}
\underline{43}\,\underline{521},$$  changes a reverse excedance in
position $i$ to a descent with the first letter $i$. The above two
maps are both reversible.
\subsection{Triangulation of the hypersimplex}

We start form a unimodular triangulation $\{t_w\mid w\in \mathfrak{S}_n\}$ of the hypercube, where
$$t_w=\{(y_1,\dots,y_n)\in [0,1]^n\mid 0\le y_{w_1}\le y_{w_n}\le
\dots\le y_{w_n}\}.$$ It is easy to see that $t_w$ has the following
$n+1$ vertices: $v_0=(0,\dots,0)$, and $v_i=(y_1,\dots,y_n)$ given
by $y_{w_1}=\dots=y_{w_{n-i}}=0$ and
$y_{w_{n-i+1}}=\dots=y_{w_n}=1$. It is clear that
$v_{i+1}=v_{i}+e_{w_{n-i}}$. Now define the following map $\phi$ (\cite{sta1},\cite{lp})
that maps $t_w$ to $s_w$, sending $(y_1,\dots,y_n)$ to $(x_1,\dots,x_n)$,
where
\begin{equation}\label{stanmap}
x_i=\begin{cases}
y_i-y_{i-1}, & \text{ if } (w^{-1})_i>(w^{-1})_{i-1},\\
1+y_i-y_{i-1},& \text{ if } (w^{-1})_i<(w^{-1})_{i-1},
\end{cases}
\end{equation}
where we set $y_0=0$. For each point $(x_1,\dots,x_n)\in s_w$, set
$x_{n+1}=k+1-(x_1+\dots+x_n)$. Since $v_{i+1}$ and $v_i$ only differ
in $y_{w_{n-i}}$, by (\ref{stanmap}), $\phi(v_i)$ and
$\phi(v_{i+1})$ only differ in $x_{w_{n-i}}$ and $x_{w_{n-i}+1}$. More explicitly, we have

\begin{lemma}\label{move}Denote $w_{n-i}$ by $r$. For $\phi(v_i)$, we have $x_{r}x_{r+1}=01$ and  for
$\phi(v_{i+1})$, we have $x_{r}x_{r+1}=10$. In other words, from
$\phi(v_{i})$ to $\phi(v_{i+1})$, we move a 1 from the $(r+1)$th
coordinate forward by one coordinate.
\end{lemma}
\begin{proof}  First, we want to show that for $\phi(v_i)$, we have
 $x_{r}=0$ and $x_{r+1}=1$. We need to look at the segment
 $y_{r-1}y_{r}y_{r+1}$,
  of $v_i$. We know that
  $y_{r}=0$, so there are four cases for $y_{r-1}y_{r}y_{r+1}$: 000, 001, 100, 101.
  If $y_{r-1}y_{r}y_{r+1}=000$ for $v_i$, then
$y_{r-1}y_{r}y_{r+1}=010$ for $v_{i+1}$. Therefore,
$w^{-1}_{r-1}<w^{-1}_{r}>w^{-1}_{r+1}$. Then by (\ref{stanmap}), we
have $x_{r}x_{r+1}=01$. Similarly, we can check in the other three
cases that $x_{r}x_{r+1}=01$ for $\phi(v_i)$.

Similarly, we can check the four cases for $y_{r-1}y_{r}y_{r+1}$:
010, 011, 110, 111 in $\phi(v_{i+1})$ and get $x_{r}x_{r+1}=10$ in
all cases.
\end{proof}

Let $\des(w^{-1})=k$. It follows from Lemma~\ref{move} that the sum of
the coordinates $\sum_{i=1}^{n}x_i$ for each vertex $\phi(v_i)$ of $s_w$
is either $k$ or $k+1$. So we have the triangulation \cite{sta1} of
the hypersimplex $\Delta_{k+1,n+1}$: $\Gamma_{k+1,n+1}=\{s_w\mid w\in \mathfrak{S}_n,\,\des(w^{-1})=k\}$.

Now we consider a graph $G_{k+1,n+1}$ on the set of simplices in the
triangulation of $\Delta_{k+1,n+1}$. There is an
edge between two simplices $s$ and $t$ if and only if they are
adjacent (they share a common facet). We can
represent each vertex of $G_{k+1,n+1}$ by a permutation and describe each edge
of $G_{k+1,n+1}$ in terms of permutations \cite{lp}. We call this new graph $\Gamma_{k+1,n+1}$. It
is clear that $\Gamma_{k+1,n+1}$ is isomorphic to $G_{k+1,n+1}$.
\begin{proposition}[{\cite[Lemma 6.1 and Theorem
7.1]{lp}}]\label{gamma}The graph $\Gamma_{k+1,n+1}$ can be described
as follows: its vertices are permutations $u=u_1\dots u_n\in
\mathfrak{S}_{n}$ with  $\des(u^{-1})=k$. There is an edge between
$u$ and $v$, if and only if one of the following two holds:
\begin{enumerate}
\item (type one edge) $u_i-u_{i+1}\neq \pm 1$ for some
$i\in\{1,\dots,n-1\}$, and $v$ is obtained from $u$ by exchanging
$u_i,u_{i+1}$.
\item (type two edge)
$u_n\neq 1,n$, and $v$ is obtained from $u$ by moving $u_n$ to the
front of $u_1$, i.e., $v=u_nu_1\dots u_{n-1}$; or this holds with
$u$ and $v$ switched.
\end{enumerate}
\end{proposition}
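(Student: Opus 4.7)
The plan is to recover $\Gamma_{k+1,n+1}$ by making the circuit triangulation of [lp] explicit and then reading off both the vertex labeling and the edge structure from local combinatorics. First, I would recall that each maximal simplex of this triangulation has $n+1$ vertices which are $0/1$-vectors of weight $k$ or $k+1$, arranged in a cyclic ``circuit'' so that consecutive vertices differ by a single elementary move at a distinguished position. Labeling such a simplex by the permutation $u = u_1 \cdots u_n \in \mathfrak{S}_n$ that records the positions of its $n$ moves gives a natural parameterization, and I would check that the admissibility condition -- that the weights of the $n+1$ circuit vertices stay in $\{k, k+1\}$ -- translates into $\des(u^{-1}) = k$. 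The key observation is that a descent of $u^{-1}$ at position $j$ records exactly the moment when the running weight of the circuit transitions from $k+1$ back to $k$, so closing up the cycle with the correct weight profile forces exactly $k$ such transitions.

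Next, I would classify the $n+1$ facets of each simplex $\alpha_u$, each obtained by deleting one circuit vertex. A facet is interior (and thus carries an edge of $G$) iff the deleted vertex is not forced onto a bounding hyperplane $x_j = 0$, $x_j = 1$, or $\sum_j x_j \in \{k, k+1\}$ of $\Delta_{k+1,n+1}$. For an interior vertex strictly between two moves in the cycle, deletion merges the two adjacent moves $u_i$ and $u_{i+1}$, and the gap can be refilled by a new intermediate vertex in exactly one other way, corresponding to the swap of $u_i$ and $u_{i+1}$; this swap preserves the admissibility condition $\des(v^{-1}) = k$ precisely when $u_i - u_{i+1} \neq \pm 1$, giving the type one edges. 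For the seam of the cycle, deletion produces the cyclic shift $v = u_n u_1 \cdots u_{n-1}$, and a direct check shows $\des(v^{-1}) = \des(u^{-1})$ iff $u_n \notin \{1, n\}$ (the two excluded values being precisely those that shift the descent count of $u^{-1}$ by $\pm 1$), yielding the type two edges.

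The main obstacle I anticipate is the interior-versus-boundary dichotomy: one has to verify that the facet conditions $u_i - u_{i+1} = \pm 1$ and $u_n \in \{1, n\}$ correspond bijectively to the facets of $\alpha_u$ lying on $\partial \Delta_{k+1,n+1}$, with no double counting or missing cases. Once this is settled, a count check finishes the proof, since the total number of valid vertices $u$ equals $A_{k+1,n}$ by Theorem \ref{vol} together with the Eulerian character of $\des(u^{-1})$, matching the known number of simplices in the triangulation, and each adjacency produced above is accounted for by exactly one of the two edge types.
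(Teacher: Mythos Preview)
The paper does not prove this proposition; it is quoted directly from \cite[Lemma 6.1 and Theorem 7.1]{lp}. What the paper does prove is the equivalent statement $R'=R$ (Proposition~\ref{vertex} in Section~5), where $R$ is obtained from $\Gamma$ by the ``rev'' map and $R'$ is built from the alcoved triangulation via the vertex expression $M_1\cdots M_{n+1}$ of each simplex as a chain in the poset $V$ of lattice points in the $z$-coordinates $z_i=x_1+\cdots+x_i$.

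Your proposal is essentially the same argument as that proof, carried out in the original $x$-coordinates (circuit picture) rather than the $z$-coordinates (alcove picture). In both cases the permutation is read off from the sequence of elementary moves along the chain, the condition $\des(u^{-1})=k$ comes from counting how many times the running ``height'' steps up (your weight transitions, the paper's $m_{i+1}=m_i+1$ for $i\in I$), the type one edges arise from swapping an interior vertex (valid iff the two adjacent labels are not consecutive), and the type two edges arise from deleting the end vertex and extending at the other end (valid iff the end label is not $1$ or $n$). The interior-versus-boundary check you flag is handled in the paper's proof of Proposition~\ref{vertex} by directly appealing to the inequalities $0\le z_{j+1}-z_j\le 1$ and $k\le z_n\le k+1$; your circuit-side formulation via the bounding hyperplanes $x_j\in\{0,1\}$ and $\sum x_j\in\{k,k+1\}$ is the same check before the coordinate change. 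So your route is correct and coincides with the paper's Section~5 argument up to the choice of coordinate model.
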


\begin{example} Here is the graph $\Gamma_{3,5}$ for $\Delta'_{3,5}$.
$$\Gamma_{3,5}:  \xy 0;/r.17pc/: (0,0)*{2413}="a";
 (0,25)*{3214}="b";
 (-10,10)*{3241}="c";
 (10,10)*{2143}="d";
 (-27,4)*{3421}="e";
 (27,4)*{1432}="f";
 (-14,-7)*{4213}="g";
 (14,-7)*{4132}="h";
 (0,-15)*{2431}="i";
 (-16,-25)*{4231}="j";
 (16,-25)*{4312}="k";
 "b"; "c"**\dir{-};
 "b"; "d"**\dir{-};
 "c"; "e"**\dir{-};
 "c"; "a"**\dir{-};
 "d"; "a"**\dir{-};
 "d"; "f"**\dir{-};
 "e"; "g"**\dir{-};
 "a"; "g"**\dir{-};
 ?*!/_-2mm/{\alpha};
 "a"; "h"**\dir{-};
 "f"; "h"**\dir{-};
 "g"; "j"**\dir{-};
 "a"; "i"**\dir{-};
 "h"; "k"**\dir{-};
 "i"; "j"**\dir{-};
 "i"; "k"**\dir{-};
  ?*!/_2mm/{\beta};
   (-28,-11)*{}="gg";
 (28,-11)*{}="hh";
 (-30,-27)*{}="jj";
 (30,-27)*{}="kk";
  "jj";"j"**\dir{--};
  "hh";"h"**\dir{--};
 "kk";"k"**\dir{--};
   "gg";"g"**\dir{--};
\endxy$$
In the above graph, the edge $\alpha$ between $u=2413$ and $v=4213$
is  a type one edge with $i=1$, since $4-2\neq \pm 1$ and one is
obtained from the other by switching $2$ and $4$; the edge $\beta$
between $u=4312$ and $v=2431$ is a type two edge, since $u_4=2\neq
1,4$ and $v=u_4u_1u_2u_3$. The dotted line attached to a simplex $s$
indicates that $s$ is adjacent to some simplex $t$ in
$\Delta_{2,5}$. Since we are considering the half-open
hypersimplices, the common facet $s\cap t$ is removed from $s$.
\end{example}

\section{Proof of Theorem \ref{exc} by a shellable triangulation}
We want to show that the $h^*$-polynomial of $\Delta'_{k+1,n+1}$ is
$$\sum_{\substack{w\in\, \mathfrak{S}_{n}\\
\exc(w)=k}}t^{\des(w)}.$$
Compare this to Theorem~\ref{Shelling and Ehrhart polynomial}: if $\Delta'_{k+1,n+1}$ has a shellable unimodular triangulation $\Gamma_{k+1,n+1}$, then its $h^*$-polynomial is
$$\sum_{\alpha\in\,
\Gamma_{k+1,n+1}}t^{\#(\alpha)}.$$
We will define a shellable unimodular triangulation $\Gamma_{k+1,n+1}$ for $\Delta'_{k+1,n+1}$, label each simplex $\alpha\in \Gamma_{k+1,n+1}$ by a permutation $w_{\alpha}\in \mathfrak{S}_n$ with $\exc(w_{\alpha})=k$. Then show that $\#(\alpha)=\des(w_{\alpha})$.

We start from the triangulation $\Gamma_{k+1,n+1}$ studied in Section 3.3. By Theorem~\ref{gamma}, each simplex is labeled by a permutation $u\in \mathfrak{S}_n$ with $\des(u^{-1})=k$. Based on the Foata map defined in Section 3.2, after the following maps, the vertices of $S_{k+1,n+1}$ are permutations in $\mathfrak{S}_n$ with $k$ excedances:
\begin{equation}\label{maps}
\Gamma_{k+1,n+1}\xrightarrow{\text{rev}}R_{k+1,n+1}\xrightarrow{-1}P_{k+1,n+1}\xrightarrow{F^{-1}}
Q_{k+1,n+1}\xrightarrow{\text{rev}}S_{k+1,n+1},
\end{equation} where the map
$F^{-1}:\,P_{k+1,n+1}\rightarrow Q_{k+1,n+1}$ sending $\hat{w}$ to $w$ is the inverse of
the Foata map and the map ``rev" is the
reverse map we defined in the proof of Theorem \ref{excdes},
reversing both the letters and positions of a permutation.

\begin{example}For an
example of the above map from $\Gamma_{3,5}$ to $S_{3,5}$, consider $u=3241$. It
is in $\Gamma_{3,5}$ since $u^{-1}=4213$ has exactly two descents.
Applying the above map to $u$, we have
$$3241\xrightarrow{\text{rev}}4132\xrightarrow{-1}2431
\xrightarrow{F^{-1}}4213 \xrightarrow{\text{rev}}2431,$$
where $2431$ has $2$ excedances.
\end{example}
Apply the above maps to vertices of $\Gamma_{k+1,n+1}$, we call the new graph $S_{k+1,n+1}$. We will define the shelling order on the simplices in the triangulation by orienting each edge in the graph $S_{k+1,n+1}$. If we orient an edge $(u,v)$ such that the arrow points to $u$, then in the shelling, let the simplex labeled by $u$ be after the simplex labeled by $v$. We can orient each edge of $S_{k+1,n+1}$ (see Definition~\ref{direct}) such that the directed graph is acyclic (Corollary~\ref{acyclic}). This digraph therefore defines a partial order on the simplices of the triangulation. We will prove that any linear extension of this partial order gives a shelling order (Theorem~\ref{shelling}). Given any linear extension obtained from the digraph, the shelling number of each simplex is the number of incoming edges. Let $w_{\alpha}$ be the permutation in $S_{k+1,n+1}$ corresponding to the simplex $\alpha$. Then we can show that for each simplex, its number of incoming edges equals $\des(w_{\alpha})$ (Theorem~\ref{number}).

\begin{example}Here is the graph $S_{3,5}$ for $\Delta'_{3,5}$ with each edge oriented according to Definition~\ref{direct}.
$$ \xy 0;/r.17pc/: (0,0)*{\dot{3}\dot{4}12}="a";
 (0,25)*{\dot{2}\dot{3}14}="b";
 (-10,10)*{2\dot{4}\dot{3}1}="c";
 (10,10)*{\dot{2}1\dot{4}3}="d";
 (-27,4)*{\dot{3}2\dot{4}1}="e";
 (27,4)*{1\dot{3}\dot{4}2}="f";
 (-14,-7)*{\dot{4}\dot{3}12}="g";
 (14,-7)*{\dot{3}1\dot{4}2}="h";
 (0,-15)*{\dot{4}\dot{3}21}="i";
 (-16,-25)*{\dot{3}\dot{4}21}="j";
 (16,-25)*{2\dot{4}1\dot{3}}="k";
(-28,-11)*{}="gg";
 (28,-11)*{}="hh";
 (-30,-27)*{}="jj";
 (30,-27)*{}="kk";
 {\ar "b";"c"};%
 {\ar "d";"b"};%
 {\ar "c";"e"};%
 {\ar "a";"c"};%
 {\ar "a";"d"};%
 {\ar "f";"d"};%
 {\ar "g";"e"};%
 {\ar "a";"g"};%
 {\ar "h";"a"};%
 {\ar "h";"f"};%
 {\ar "a";"i"};%
 {\ar "g";"j"};%
 {\ar "j";"i"};%
 {\ar "k";"i"};%
 {\ar "k";"h"};%
 {\ar@{-->} "jj";"j"};%
  {\ar@{-->} "hh";"h"};%
   {\ar@{-->} "kk";"k"};%
    {\ar@{-->} "gg";"g"};%
\endxy$$
For example, the vertex labeled by $3412$ with $\des(3412)=1$ has one incoming edge. Another example, consider the vertex labeled by $3142$. It has two incoming edges (including the dotted edge), which is the same as its number of descents. So we can see that it is crucial here that we are looking at the half-open hypersimplex instead of the usual hypersimplex.
\end{example}
In the following three subsections, we will first define how we orient each edge in $S_{k+1,n+1}$ and each vertex has the correct number of incoming edges, then we will show that the digraph is acyclic, and finally, any linear extension gives a shelling.
\subsection{Correct shelling number}
We need a closer look of each graph $R_{k+1,n+1},P_{k+1,n+1},Q_{k+1,n+1}$ obtained in the process of getting $S_{k+1,n+1}$ from $\Gamma_{k+1,n+1}$. First, from the description of $\Gamma_{k+1,n+1}$ (Proposition~\ref{gamma}) and  the maps in (\ref{maps}):
\begin{enumerate}
\item []$R_{k+1,n+1}$: its vertices are $u\in \mathfrak{S}_{n}$ with
$\des(u^{-1})=k$. There are two types of edges:
\begin{enumerate}
\item [1]type one edge is the
same as in $\Gamma$;
\item [2]$u$ and $v$ has a type two edge if and only if
$u_1\neq 1,n$, and $v$ is obtained from $u$ by moving $u_1$ to the
end of $u_{n}$, i.e., $v=u_2\dots u_{n}u_{1}$; or switch the role of
$u$ and $v$.
\end{enumerate}
\item []$P_{k+1,n+1}$: its vertices are $u\in
\mathfrak{S}_{n}$ with  $\des(u)=k$. There are two types of edges:
\begin{enumerate}
\item[1]
$(u,v)$ is a type one edge if and only if  the numbers $i$ and $i+1$
are not next to each other in $u$, and $v$ is obtained from $u$ by
exchanging the numbers $i$ and $i+1$. We label this edge $e_i$.
\item[2]
$(u,v)$ is a type two edge if and only if $u_1\neq 1$ and $u_n\neq
1$, and $v_i=u_{i}-1\pmod n$ for $i=1,\dots,n$ (we denote this by
$v=u-1 \pmod n$), or switch the role of $u$ and $v$.
 We label this edge $e_0$.
 \end{enumerate}
\end{enumerate}

\begin{example}Here are the graphs $R_{3,5}$ and $P_{3,5}$ for $\Delta'_{3,5}$.
\begin{center}
$\Gamma_{3,5}\xrightarrow{\text{rev}}\,R_{3,5}:$
 $ \xy 0;/r.17pc/: (0,0)*{2413}="a";
 (0,25)*{1432}="b";
 (-10,10)*{4132}="c";
 (10,10)*{2143}="d";
 (-27,4)*{4312}="e";
 (27,4)*{3214}="f";
 (-14,-7)*{2431}="g";
 (14,-7)*{3241}="h";
 (0,-15)*{4213}="i";
 (-16,-25)*{4231}="j";
 (16,-25)*{3421}="k";
 "b"; "c"**\dir{-};
 "b"; "d"**\dir{-};
 "c"; "e"**\dir{-};
 "c"; "a"**\dir{-};
 "d"; "a"**\dir{-};
 "d"; "f"**\dir{-};
 "e"; "g"**\dir{-};
 "a"; "g"**\dir{-};
  ?*!/_-2mm/{\alpha};
 "a"; "h"**\dir{-};
 "f"; "h"**\dir{-};
 "g"; "j"**\dir{-};
 "a"; "i"**\dir{-};
 "h"; "k"**\dir{-};
 "i"; "j"**\dir{-};
 "i"; "k"**\dir{-};
   ?*!/_2mm/{\beta};
   (-28,-11)*{}="gg";
 (28,-11)*{}="hh";
 (-30,-27)*{}="jj";
 (30,-27)*{}="kk";
  "jj";"j"**\dir{--};
  "hh";"h"**\dir{--};
 "kk";"k"**\dir{--};
   "gg";"g"**\dir{--};
\endxy$ $\xrightarrow{-1}\,P_{3,5}:$
 $ \xy 0;/r.17pc/: (0,0)*{3142}="a";
 (0,25)*{1432}="b";
 (-10,10)*{2431}="c";
 (10,10)*{2143}="d";
 (-27,4)*{3421}="e";
 (27,4)*{3214}="f";
 (-14,-7)*{4132}="g";
 (14,-7)*{4213}="h";
 (0,-15)*{3241}="i";
 (-16,-25)*{4231}="j";
 (16,-25)*{4312}="k";
 "b"; "c"**\dir{-};
 "b"; "d"**\dir{-};
 "c"; "e"**\dir{-};
 "c"; "a"**\dir{-};
 "d"; "a"**\dir{-};
 "d"; "f"**\dir{-};
 "e"; "g"**\dir{-};
 "a"; "g"**\dir{-};
  ?*!/_-2mm/{e_3};
 "a"; "h"**\dir{-};
 "f"; "h"**\dir{-};
 "g"; "j"**\dir{-};
 "a"; "i"**\dir{-};
 "h"; "k"**\dir{-};
 "i"; "j"**\dir{-};
 "i"; "k"**\dir{-};
   ?*!/_2mm/{e_0};
   (-28,-11)*{}="gg";
 (28,-11)*{}="hh";
 (-30,-27)*{}="jj";
 (30,-27)*{}="kk";
  "jj";"j"**\dir{--};
  "hh";"h"**\dir{--};
 "kk";"k"**\dir{--};
   "gg";"g"**\dir{--};
\endxy$
\end{center}
In the graph $R_{3,5}$ above, the edge labeled $\alpha$ is of type one
switching $1$ and $3$; and $\beta$ is of type two, with $u=3421$ and
$v=u_2u_3u_4u_1=4213$. In the above graph $P_{3,5}$, the edge $e_3$ is an
edge of type one between $u=4132$ and $3142$ switching $3$ and $4$
since they are not next to each other; and the edge $e_1$ between
$u=4312$ and $v=3241=u-1\pmod 4$ is of type two.
\end{example}
\begin{definition}Let $w\in \mathfrak{S}_n$. Define its \emph{descent set}
to be $\Des(w)=\{i\in[n-1]\mid w_i>w_{i+1}\}$  its \emph{leading
descent set} to be the actual numbers on these positions,
$\LdDes(w)=\{w_{i}\mid i\in\Des(w)\}$.
\end{definition}
For $w\in P_{k+1,n+1}$, since $\des(w)=k$, we have
$\Des(w)=\{i_1,\dots,i_k\}$. By the description of edges in $P_{k+1,n+1}$, we
have the following relation of $\Des$ and $\LdDes$ for an edge in
$P_{k+1,n+1}$:
\begin{lemma}\label{desld}Let $v$ be a vertex in $P_{k+1,n+1}$.
\begin{enumerate}
\item Define $u$ by $v=u-1 \pmod n$. There are three cases depending on the position of the letter $n$ in $v$:
\begin{enumerate}
\item if $v_1=n$, then $\Des(u)=\Des(v)\backslash\{1\}$, thus $u\in P_{k,n+1}$;
\item if $v_n=n$, then $\Des(u)=\Des(v)\cup\{n-1\}$ and $u\in P_{k+2,n+1}$;
\item if $v_i=n$ with $i\neq 1,n$, then
$\Des(u)=\Des(v)\cup\{i-1\}\backslash\{i\}$ and $u\in P_{k+1,n+1}$.
\end{enumerate}

\item Let $e_i=(u,v)$ be a type one edge in $P_{k+1,n+1}$. Then
 we have $\Des(u)=\Des(v)$. In this case, we also
compare $\LdDes(u)$ and $\LdDes(v)$:
\begin{enumerate}
\item if $i,i+1\in \LdDes(u)$ or $i,i+1\notin \LdDes(u)$, we have
$\LdDes(u)=\LdDes(v)$;
\item otherwise, if  $i\in \LdDes(u)$ and $i+1\notin \LdDes(u)$, we
have $\LdDes(v)=\LdDes(u)\cup\{i+1\}\backslash\{i\}$.
\end{enumerate}
\end{enumerate}
\end{lemma}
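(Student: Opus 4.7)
The plan is to track how the descent set changes under each type of edge by reducing to local analysis around the positions that actually move. For part (1), the key observation is that $v = u - 1 \pmod n$ sends $v_j$ to $v_j + 1$ at every position $j$ with $v_j < n$, while the unique position $i$ with $v_i = n$ is sent to $u_i = 1$. Consequently, at every position $j$ such that neither $v_j$ nor $v_{j+1}$ equals $n$, the relative order of $u_j$ and $u_{j+1}$ matches that of $v_j$ and $v_{j+1}$. All descent changes are therefore confined to the one or two positions adjacent to the slot occupied by $n$ in $v$.

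I would then break into cases (a), (b), (c) according to whether $n$ sits at the beginning, end, or interior of $v$, inspecting only the positions that can flip. In case (a) with $v_1 = n$, position $1$ is a descent in $v$ but becomes an ascent in $u$ (since $u_1 = 1 < u_2$); in case (b) with $v_n = n$, position $n-1$ flips the other way (since $u_n = 1 < u_{n-1}$); in case (c) with $v_i = n$ interior, position $i-1$ flips from ascent to descent and position $i$ flips from descent to ascent, so the two changes cancel. Reading off $\des(u)$ in each case places $u$ in the claimed $P_{k',n+1}$.

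For part (2), let $i$ sit at position $p$ in $u$ and $i+1$ sit at position $q$, with $|p-q| \geq 2$ by the hypothesis that $i$ and $i+1$ are not adjacent. At every position $j \notin \{p-1, p, q-1, q\}$, the swap changes nothing. At the four adjacent positions, the comparison is between $i$ (or $i+1$) and some letter $\ell \in [n]\setminus\{i, i+1\}$, so $\ell < i$ or $\ell > i+1$, and the descent/ascent status is unchanged whether the swapped slot holds $i$ or $i+1$. Hence $\Des(u) = \Des(v)$. For the $\LdDes$ comparison, I would observe that every letter $\neq i, i+1$ contributes identically to $\LdDes(u)$ and $\LdDes(v)$, and that $i \in \LdDes(u)$ iff $p \in \Des(u)$ while $i+1 \in \LdDes(u)$ iff $q \in \Des(u)$ (and symmetrically in $v$ with the roles of $p,q$ swapped). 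A short case analysis on $\{i,i+1\} \cap \LdDes(u)$ then yields the two advertised subcases.

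The main obstacle is bookkeeping rather than a conceptual hurdle: one must be careful in part (1) that boundary cases correctly handle the absence of positions $0$ and $n$ from the descent set, and in part (2) that the hypothesis $|p-q|\geq 2$ is exactly what is needed to guarantee that every neighbor of the swapped positions avoids the values $i$ and $i+1$, which is precisely the condition that lets the descent status at those positions be insensitive to the swap.
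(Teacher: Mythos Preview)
Your proposal is correct and is exactly the kind of direct verification the paper has in mind. In fact the paper gives no proof of this lemma at all; it simply states it as an immediate consequence of ``the description of edges in $P$,'' so what you have written is a careful unpacking of the argument the paper leaves to the reader.
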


 Now consider the map from $P_{k+1,n+1}$ to $S_{k+1,n+1}$. Notice that this map is the same as defined in Theorem \ref{excdes}.
Therefore, we have
\begin{corollary}\label{QS}Vertices in $Q_{k+1,n+1}$ are permutations $w\in \mathfrak{S}_{n}$ with
 $k$ reverse excedances ($i$ such that $w_i<i$), and vertices in $S_{k+1,n+1}$ are
permutation $v\in \mathfrak{S}_{n}$ with $\exc(v)=k$. Moreover,
 the \emph{reverse excedances set} in $w\in Q_{k+1,n+1}$, denoted by
$\Rexc(w)=\{i\mid w_i<i\}$ is the same as $\LdDes(\hat{w})$, where
$\hat{w}=F(w)\in P_{k+1,n+1}$. So part 2 of Lemma \ref{desld} for
$\LdDes(\hat{w})$ also apply for $\Rexc(w)$.
\end{corollary}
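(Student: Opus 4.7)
The plan is to unwind the two maps $F^{-1}\colon P\to Q$ and $\operatorname{rev}\colon Q\to S$ and verify each claim in turn, reusing the cycle-theoretic analysis already given in the proof of Theorem \ref{excdes}.

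First, to characterize $Q$, I would start from an arbitrary $\hat w\in P$ (so $\des(\hat w)=k$) and set $w=F^{-1}(\hat w)$. Recall that $w$ is obtained by placing a left parenthesis before each left-to-right maximum of $\hat w$ and closing cycles. I would show that the assignment $i\mapsto\hat w(i)$ is a bijection from $\Des(\hat w)$ onto $\Rexc(w)$. The forward direction is already essentially present in Theorem \ref{excdes}: at a descent position $i$, the entry $\hat w(i+1)<\hat w(i)$ cannot be a left-to-right maximum, so $\hat w(i)$ and $\hat w(i+1)$ lie in the same cycle, giving $w(\hat w(i))=\hat w(i+1)<\hat w(i)$ and hence $\hat w(i)\in\Rexc(w)$. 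For the converse, I would analyze the position of a reverse excedance $j$ in its cycle $(c_1,c_2,\dots,c_m)$ of the standard representation of $w$: because $c_1$ is the largest element of the cycle, one has $w(c_m)=c_1>c_m$, ruling out $j=c_m$; in every remaining case $j=c_t$ with $1\le t<m$ and $w(j)=c_{t+1}<j$, and removing parentheses places $j$ immediately before $c_{t+1}$ in $\hat w$, producing a descent whose leading letter is exactly $j$. This establishes the bijection and yields the set equality $\Rexc(w)=\LdDes(\hat w)$, and in particular $|\Rexc(w)|=k$, which identifies the vertex set of $Q$.

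Next, to characterize $S$, I would recall that the reverse map sends $w$ to $v$ defined by $v(n+1-i)=n+1-w(i)$. A direct substitution gives $w(i)<i$ if and only if $v(n+1-i)>n+1-i$, so the position $i\in\Rexc(w)$ corresponds bijectively to the position $n+1-i\in\exc(v)$, and hence $\exc(v)=|\Rexc(w)|=k$, identifying the vertex set of $S$.

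Finally, since $\Rexc(w)=\LdDes(\hat w)$ as subsets of $[n]$, every property of $\LdDes(\hat w)$ recorded in part 2 of Lemma \ref{desld}, describing how this set changes across a type-one edge $e_i$, transfers verbatim to a statement about $\Rexc(w)$. The main obstacle I anticipate is the converse direction of the bijection in the first step: one must verify that no reverse excedance is left out of $\LdDes(\hat w)$, which requires the brief case analysis on the position of $j$ inside its cycle and, in particular, using that the cycle-terminal element is an excedance rather than a reverse excedance because the cycle's initial element is its largest.
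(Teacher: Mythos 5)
Your proposal is correct and follows essentially the same route as the paper, which simply remarks that the map $P\to S$ is the composite used in the proof of Theorem~\ref{excdes} and defers to that argument. The only thing you add beyond the paper's terse treatment is a careful verification of the converse implication (that every reverse excedance $j$ of $w$ arises as a leading descent letter of $\hat w$), via the case analysis on the position of $j$ within its cycle of the standard representation, which the paper states without detail.
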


For $w\in Q$,  decompose $[n-1]$ by $A_w\cup B_w\cup C_w$ (disjoint
union), where
\begin{align}
A_w=&\{i\in [n-1]\mid i\notin\Rexc(w),\,i+1\in\Rexc(w)\},\\
B_w=&\{i\in [n-1]\mid i+1\notin\Rexc(w),\,i\in\Rexc(w)\}, \text{and}\\
\label{C}C_w=&\{i\in [n-1]\mid i,i+1\notin\Rexc(w)\text{ or
}i,i+1\in\Rexc(w)\}.
\end{align}
For example, consider $v=54\dot{1}\dot{2}6\dot{3}8\dot{7}9$, where
the dotted positions are in $\Rexc(v)$. Then $A_v=\{2,5,7\}$,
$B_v=\{4,8,6\}$ and $C_v=\{1,3\}$.

 For an edge $(u,v)\in Q_{k+1,n+1}$, we label it $e_i$ according to the
 labeling
of the corresponding edge $e_i=(\hat{u},\hat{v})\in P_{k+1,n+1}$. Then we
orient each edge in $Q_{k+1,n+1}$ in the following way:
\begin{definition}\label{direct}
Let $e_i=(u,v)$ be an edge in $Q_{k+1,n+1}$.
\begin{enumerate}
\item For type one edge ($i\neq 0$),
\begin{enumerate}
\item if $\Rexc(u)\neq\Rexc(v)$, then define $u\rightarrow
v$ if and only if $i\in \Rexc(v)$ (this implies $i\notin \Rexc(u)$
by Lemma \ref{desld} part 2(b));
\item if $\Rexc(u)=\Rexc(v)$, then define $u\rightarrow v$ if and only if $v_i>v_{i+1}$ (this implies
$u_i<u_{i+1}$ by Corollary \ref{weak}).
\end{enumerate}
\item For type two edge ($i=0$), define $u\rightarrow
v$ if and only if $\hat{v}=\hat{u}-1 \pmod n$, where
$(\hat{u},\hat{v})$ is the corresponding edge in $P_{k+1,n+1}$.
\end{enumerate}
\end{definition}
Based on the above definition and the Foata map, we have the following
description of incoming edges for $v\in Q_{k+1,n+1}$.
\begin{lemma}\label{edgeQ}Let $v$ be a vertex in $Q_{k+1,n+1}$. Then
\begin{enumerate}
\item $v$ has an incoming type one edge ($e_i$ with $i\neq 0$) if
and only if one of the following two holds:
\begin{enumerate}
\item $i\in B_v$;
\item $i\in C_v\cap\Des(v)$.
\end{enumerate}
\item  $v$ has an incoming type two edge ($e_0$) if
and only if $v_n\neq n$.
\end{enumerate}
\end{lemma}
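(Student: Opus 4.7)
The plan is to mechanically unpack Definition \ref{direct} and translate each orientation rule into an intrinsic condition on $v$, using Corollary \ref{QS} to transport part 2 of Lemma \ref{desld} from $\LdDes(\hat w)$ over to $\Rexc(w)$.

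For part (1), I fix $v$ and $i \neq 0$ and split along the two subcases of Definition \ref{direct}(1). In subcase (a), the hypothesis $\Rexc(u) \neq \Rexc(v)$ combined with Lemma \ref{desld}(2)(b) tells us that exactly one of $i, i+1$ lies in $\Rexc(v)$, and the orientation rule ``$u \to v$ iff $i \in \Rexc(v)$'' then forces $i \in \Rexc(v)$ and $i+1 \notin \Rexc(v)$, i.e., $i \in B_v$. In subcase (b), the hypothesis $\Rexc(u) = \Rexc(v)$ combined with Lemma \ref{desld}(2)(a) places $i$ in $C_v$, while the orientation ``$u \to v$ iff $v_i > v_{i+1}$'' contributes $i \in \Des(v)$. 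Conversely, given $i \in B_v$ or $i \in C_v \cap \Des(v)$, I construct $u$ by transposing the values $i$ and $i+1$ in $\hat v$ and applying $F^{-1}$; this produces a legitimate $P$-edge provided the values $i$ and $i+1$ are nonadjacent in $\hat v$. I verify nonadjacency by a two-case check on their relative order: if $i$ were directly followed by $i+1$ in $\hat v$, the resulting ascent would force $i \notin \LdDes(\hat v)$, while if $i+1$ were directly followed by $i$, the resulting descent would force $i+1 \in \LdDes(\hat v)$; in either case the hypothesis on $B_v$ or $C_v \cap \Des(v)$ is contradicted via $\LdDes(\hat v) = \Rexc(v)$.

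For part (2), Definition \ref{direct}(2) shows that an incoming $e_0$ at $v$ has a unique candidate source $\hat u = \hat v + 1 \pmod n$, and the $P$-description of type-two edges requires $\hat u_1 \neq 1$ and $\hat u_n \neq 1$, i.e., $\hat v_n \neq n$. This rephrases cleanly in terms of $v$ via the standard cycle form underlying the Foata map: $n$ must begin the last cycle of $v$ in standard representation, so the equality $\hat v_n = n$ holds precisely when that last cycle is the singleton $(n)$, equivalently when $v_n = n$. This yields the claimed biconditional $v_n \neq n$.

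The principal technical hurdle is the nonadjacency verification in part (1); once that is in hand, the rest is a direct translation through Definition \ref{direct} and Corollary \ref{QS}.
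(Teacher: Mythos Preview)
Your part~(2) argument has a real gap. The $P$-condition for a type-two edge directed into $\hat v$ is $\hat u_1\neq 1$ \emph{and} $\hat u_n\neq 1$, which translates to $\hat v_1\neq n$ \emph{and} $\hat v_n\neq n$; your ``i.e., $\hat v_n\neq n$'' silently drops the first clause. When $\hat v_1=n$ (and $\hat v_n\neq n$), the candidate $\hat u=\hat v+1\pmod n$ has one fewer descent by Lemma~\ref{desld}(1)(a) and so lies in $P_{k,n+1}$, not $P_{k+1,n+1}$; there is no genuine $P$-edge. The paper's proof handles this explicitly: in the half-open setting this still counts as an incoming edge at $v$, because the facet shared with the adjacent hypersimplex is removed from $v$ (these are the dotted edges in the examples). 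Without that observation your equivalence fails in one direction.

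Your part~(1) converse also has a gap in the $C_v\cap\Des(v)$ case. If $\hat v=\cdots i(i{+}1)\cdots$ you correctly get $i\notin\LdDes(\hat v)=\Rexc(v)$, and then $i\in C_v$ forces $i{+}1\notin\Rexc(v)$; but this is perfectly consistent with $i\in C_v$, so you have not yet reached a contradiction ``via $\LdDes(\hat v)=\Rexc(v)$''. The contradiction must come from $i\in\Des(v)$, and for that you need to trace through the inverse Foata map: from $\hat v=\cdots i(i{+}1)\cdots$ one checks (splitting on whether $i{+}1$ is a left-to-right maximum) that $v_i\in\{i,i{+}1\}$ and $v_{i+1}\ge i{+}1$, whence $v_i<v_{i+1}$. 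A symmetric check handles $\hat v=\cdots(i{+}1)i\cdots$. The paper's phrase ``with a careful look at the inverse Foata map'' is pointing at exactly this computation; your writeup attributes the contradiction to the wrong hypothesis.
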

\begin{proof}\begin{enumerate}
\item First, by Definition \ref{direct}, and Lemma \ref{desld}, it is clear that if there
exists an edge $e_i$ with $i\neq 0$, $u\rightarrow v$ for some $u\in
Q_{k+1,n+1}$, then $v$ satisfies one of conditions (a) and (b). On the other
hand, we need to show that, if  (a) or (b) is true for $v$, then
there exists an edge $e_i=(u,v)\in Q_{k+1,n+1}$. Then, by Definition
\ref{direct}, the edge will be oriented as $u\rightarrow v$. In
fact, consider the corresponding permutation $\hat{v}\in P_{k+1,n+1}$. From
the description of $P_{k+1,n+1}$, $\hat{v}$ has a type one edge $e_i$ if and
only if $i$ and $i+1$ are not next to each other in $\hat{v}$. But
with a careful look at the inverse Foata map, we can see that if (a)
or (b) is true for $v$, then neither the case $\hat{v}=\dots
i(i+1)\dots$ nor $\hat{v}=\dots (i+1)i\dots$ can be true.

\item Let $\hat{u}=\hat{v}+1 \pmod n$ in $P_{k+1,n+1}$. If $\des(\hat{u})=\des(\hat{v})$,
then we have $\hat{u}\in P_{k+1,n+1}$, so $\hat{v}$ has a type two edge, and
this edge points to $v$ by Definition \ref{direct}. If
$\des(\hat{u})=\des(\hat{v})-1$, then $\hat{v}$ still has an
incoming edge $e_1$, since we are considering the half-open
hypersimplex and this edge indicates that the common facet $u\cap v$
is removed from $v$. Then by Lemma \ref{desld}, part 1, $\des(\hat{u})\le
\des(\hat{v})$ if and only if case (b) does not happen, i.e.,
$\hat{v}_n\neq n$ in $P$. This is equivalent to $v_n\neq n$ in $Q_{k+1,n+1}$ by the inverse Foata map.\qedhere
\end{enumerate}
\end{proof}
\begin{definition}\label{bigsmall}Let $I,J\subset[n-1]$. Define a \emph{big
block} of $Q_{k+1,n+1}$ to be $b_I=\{w\in Q_{k+1,n+1}\mid \Des(\hat{w})=I\}$, where
$\hat{w}=F(w)\in P_{k+1,n+1}$. Define a \emph{small block} $s_{I,J}=\{w\in
b_I\mid \Rexc(w)=J\}$. We say the small block $s_{I,J}$ is smaller
than $s_{I',J'}$ if 1) $I<I'$ or 2) $I=I'$ and $J>J'$.
\end{definition}
For two different sets $I,I'\subset[n-1]$ with $I=\{i_1\le\dots \le
i_k\}$ and $I'=\{i'_1\le\dots \le i'_{\ell}\}$, we define $I<I'$ if
1) $k<\ell$ or 2) $k=\ell$ and $i_j\le i'_j$ for all $j=1,\dots,k$.
Then by Lemma \ref{desld} and Lemma \ref{edgeQ}, we have
\begin{corollary}\label{edgechange}For an edge $e_i=u\rightarrow v\in Q_{k+1,n+1}$ with $u\in
s_{I,J}$ and $v\in s_{I',J'}$,
\begin{enumerate}
\item if $i=0$, then $I'>I$;
\item if $i\neq 0$ and $i\in B_v$, then $I=I'$ and $J'<J$;
\item if $i\neq 0$ and $i\in C_v$, then $I=I'$ and $J=J'$.
\end{enumerate}
\end{corollary}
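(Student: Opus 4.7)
The plan is to verify each case by tracking how $I=\Des(\hat w)$ and $J=\Rexc(w)=\LdDes(\hat w)$ (using Corollary \ref{QS}) transform along the edge, reading off the transformations from Lemma \ref{desld} while pinning down the orientation via Definition \ref{direct}.

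For the case $i=0$, the edge $(\hat u,\hat v)$ is a type two edge of $P$ with $\hat v=\hat u-1\pmod n$ by Definition \ref{direct}(2). The edge conditions in $P$ force $\hat u_1\neq 1$ and $\hat u_n\neq 1$, equivalently $\hat v_1\neq n$ and $\hat v_n\neq n$, so only case (c) of Lemma \ref{desld}(1) applies. Writing $j$ for the index with $\hat v_j=n$ (automatically $1<j<n$), we get $\Des(\hat u)=\Des(\hat v)\cup\{j-1\}\setminus\{j\}$, and the extremality $\hat v_j=n$ ensures $j\in\Des(\hat v)$ and $j-1\notin\Des(\hat v)$. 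Hence $I$ is obtained from $I'$ by replacing $j$ with the smaller value $j-1$; so $|I|=|I'|$ and $I<I'$ componentwise, which is the desired $I'>I$.

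For $i\neq 0$ the edge is of type one in $P$, and Lemma \ref{desld}(2) immediately gives $\Des(\hat u)=\Des(\hat v)$, i.e., $I=I'$. If $i\in C_v$ then by definition $i$ and $i+1$ are either both in or both absent from $\Rexc(v)=\LdDes(\hat v)$, which is exactly the hypothesis of Lemma \ref{desld}(2)(a); hence $\LdDes(\hat u)=\LdDes(\hat v)$ and $J=J'$.

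The case $i\in B_v$ is where orientation matters most. The assumption $i\in B_v$ means $i\in\Rexc(v)$ and $i+1\notin\Rexc(v)$, which is exactly the criterion by which Definition \ref{direct}(1)(a) selects the direction $u\to v$; this pins $v$ as the target and $u$ as the source in Lemma \ref{desld}(2)(b). That lemma then yields $\LdDes(\hat u)=\LdDes(\hat v)\cup\{i+1\}\setminus\{i\}$, i.e., $J=J'\cup\{i+1\}\setminus\{i\}$. So $|J|=|J'|$, the sorted lists agree except that $J'$ has $i$ where $J$ has $i+1$, and thus $J'<J$. The principal subtle point throughout is precisely this bookkeeping of which endpoint plays the role of source versus target in each subcase of Lemma \ref{desld}(2)(b); once the direction $u\to v$ is fixed by Definition \ref{direct}, the rest is a direct transcription of the set transformations in Lemma \ref{desld}.
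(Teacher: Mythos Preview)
Your proof is correct and is essentially the same approach the paper intends: the paper simply cites Lemma \ref{desld} and Lemma \ref{edgeQ} without spelling out details, and your argument is exactly the unpacking of those lemmas (together with Definition \ref{direct} and Corollary \ref{QS}) to track how $I=\Des(\hat w)$ and $J=\LdDes(\hat w)=\Rexc(w)$ change across each type of edge. The only remark worth adding is that in case $i=0$ your appeal to ``the edge conditions in $P$'' to exclude Lemma \ref{desld}(1)(a) tacitly uses that both endpoints lie in $Q_{k+1,n+1}$; this is the intended reading of ``an edge $e_0=u\to v\in Q$'' here, and the half-open (dotted) incoming edges are handled separately elsewhere.
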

\begin{example}\label{PQ}Here is an example of Definition \ref{direct}, Lemma \ref{edgeQ}
 and Corollary \ref{edgechange}, with a
type one edge drawn in $Q_{3,5}$ and a type two ($e_0$) in $P_{3,5}$ for
$\Delta'_{3,5}$.
 \begin{center} $P_{3,5}:$
 $ \xy 0;/r.2pc/: (0,0)*{3142}="a";
 (0,25)*{1432}="b";
 (-10,10)*{2431}="c";
 (10,10)*{2143}="d";
 (-27,4)*{3421}="e";
 (27,4)*{3214}="f";
 (-14,-7)*{4132}="g";
 (14,-7)*{4213}="h";
 (0,-15)*{3241}="i";
 (-16,-25)*{4231}="j";
 (16,-25)*{4312}="k";
 "b"; "c"**\dir{-};
{\ar "d";"b"};%
?*!/_2mm/{e_0}; "c"; "e"**\dir{-};
 {\ar "a";"c"};%
 ?*!/_2mm/{e_0};
 "d"; "a"**\dir{-};
{\ar "f";"d"};%
?*!/_2mm/{e_0};
 {\ar "g";"e"};%
 ?*!/_2mm/{e_0};
 "a"; "g"**\dir{-};
 {\ar "h";"a"};%
 ?*!/_2mm/{e_0};
 "f"; "h"**\dir{-};
 "g"; "j"**\dir{-};
 "a"; "i"**\dir{-};
 "h"; "k"**\dir{-};
 "i"; "j"**\dir{-};
  {\ar "k";"i"};%
  ?*!/_2mm/{e_0};
   (-28,-11)*{}="gg";
 (28,-11)*{}="hh";
 (-30,-27)*{}="jj";
 (30,-27)*{}="kk";
 {\ar@{-->} "jj";"j"};%
 ?*!/_-2mm/{e_0};
  {\ar@{-->} "hh";"h"};%
  ?*!/_2mm/{e_0};
   {\ar@{-->} "kk";"k"};%
   ?*!/_2mm/{e_0};
    {\ar@{-->} "gg";"g"};%
    ?*!/_-2mm/{e_0};
\endxy$ $\rightarrow$ $Q_{3,5}:$
$ \xy 0;/r.2pc/: (0,0)*{3412}="a";
 (0,25)*{1423}="b";
 (-10,10)*{4213}="c";
 (10,10)*{2143}="d";
 (-27,4)*{4132}="e";
 (27,4)*{3124}="f";
 (-14,-7)*{3421}="g";
 (14,-7)*{3241}="h";
 (0,-15)*{4321}="i";
 (-16,-25)*{4312}="j";
 (16,-25)*{2413}="k";
 {\ar "b";"c"};%
   ?*!/_2mm/{e_1};
 "b"; "d"**\dir{-};
  {\ar "c";"e"};%
  ?*!/_-2mm/{e_2};
 "c"; "a"**\dir{-};
  {\ar "a";"d"};%
 ?*!/_2mm/{e_2};
 "d"; "f"**\dir{-};
 "e"; "g"**\dir{-};
 "a"; "h"**\dir{-};
  {\ar "h";"f"};%
   ?*!/_-2mm/{e_3};
  {\ar "k";"h"};%
  ?*!/_2mm/{e_2};
 "i"; "k"**\dir{-};
 {\ar "a";"g"};%
    ?*!/_-2mm/{e_3};
 {\ar "a";"i"};%
  ?*!/_2mm/{e_1};
 {\ar "g";"j"};%
  ?*!/_-2mm/{e_1};
 {\ar "j";"i"};%
  ?*!/_2mm/{e_3};
(-28,-11)*{}="gg";
 (28,-11)*{}="hh";
 (-30,-27)*{}="jj";
 (30,-27)*{}="kk";
  "jj";"j"**\dir{--};
  "hh";"h"**\dir{--};
 "kk";"k"**\dir{--};
   "gg";"g"**\dir{--};
\endxy$
\end{center}
It is clear from the graph $P_{3,5}$ that $\hat{v}\in P_{3,5}$ has an incoming
$e_0$ if and only if $\hat{v}_4\neq 4$, which is equivalent to
$v_4\neq 4$ in $Q_{3,5}$. Consider $v=4321\in Q_{3,5}$. It has
$\Rexc(v)=\{3,4\}$. Since $v_4\neq 4$, it has an incoming $e_0$ edge
(shown in $P_{3,5}$); since $v$ with $i=1,3$ satisfies condition b) in
Lemma \ref{edgeQ}, there are two incoming edges $e_1$ and $e_3$ of
type two, and these are all the incoming edges of $v$.

Consider the edge $e_0=u\rightarrow v\in Q_{3,5}$ whose corresponding edge
in $P_{3,5}$ is between $\hat{u}=4312$ and $\hat{v}=3241$. We have
$I=\Des(\hat{u})=\{1,2\}$ and $I'=\Des(\hat{v})=\{1,3\}$, with
$I'>I$. Consider the edge $e_2=u\rightarrow v\in Q_{3,5}$ with
$u=34\dot{1}\dot{2}$ and $v=2\dot{1}4\dot{3}$, where the dotted
positions are in $\Rexc$. Since $2\in B_v$, we have
$J=\Rexc(u)=\{3,4\}$ and $J'=\Rexc(v)=\{2,4\}$ with $J>J'$. Finally,
consider $e_1=u\rightarrow v\in Q_{3,5}$ with $u=34\dot{2}\dot{1}$ and
$v=43\dot{1}\dot{2}$. Since $1\in C_v$, we have
$\Rexc(u)=\{3,4\}=\Rexc(v)$.
\end{example}

With the orientation of $Q_{k+1,n+1}$ by Definition \ref{direct}, we have
\begin{theorem}\label{number}
For each vertex $v\in Q_{k+1,n+1}$, the number of its incoming edges equals
$\des(v)$.
\end{theorem}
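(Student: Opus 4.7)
The plan is to compare $\des(v)$ directly with the count of incoming edges produced by Lemma~\ref{edgeQ}. That lemma expresses the number of incoming edges at $v$ as
$$N(v) \;=\; |B_v| \;+\; |C_v \cap \Des(v)| \;+\; [v_n \neq n],$$
where the three summands correspond respectively to type-one edges arising from $B_v$, type-one edges arising from $C_v$, and the possible type-two edge $e_0$. Since $[n-1] = A_v \sqcup B_v \sqcup C_v$ is a disjoint union, splitting $\Des(v)$ along this partition turns the desired equality $N(v) = \des(v)$ into the reduced identity
$$|A_v \cap \Des(v)| \;+\; |B_v \cap \Des(v)| \;=\; |B_v| \;+\; [v_n \neq n].$$

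The key observation I would then record is that positions in $A_v$ automatically produce descents while positions in $B_v$ automatically produce ascents. Indeed, if $i \in A_v$ then $v_i \geq i$ and $v_{i+1} \leq i$, so $v_i \geq i \geq v_{i+1}$; since the values are distinct (they cannot both equal $i$), this forces $v_i > v_{i+1}$. Dually, if $i \in B_v$ then $v_i \leq i-1 < i+1 \leq v_{i+1}$, so $v_i < v_{i+1}$. Hence $A_v \subseteq \Des(v)$ and $B_v \cap \Des(v) = \emptyset$, and the reduction collapses further to
$$|A_v| \;=\; |B_v| \;+\; [v_n \neq n].$$

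The final step is a short telescoping argument. Let $\chi : [n] \to \{0,1\}$ be the indicator function of $\Rexc(v)$. Then $A_v$ is precisely the set of $i \in [n-1]$ with $\chi(i) = 0$ and $\chi(i+1) = 1$, while $B_v$ is the set of $i$ with $\chi(i) = 1$ and $\chi(i+1) = 0$, so
$$|A_v| - |B_v| \;=\; \sum_{i=1}^{n-1}\bigl(\chi(i+1) - \chi(i)\bigr) \;=\; \chi(n) - \chi(1).$$
Since $v_1 \geq 1$ always we have $\chi(1) = 0$, and since $v_n \leq n$ we have $\chi(n) = [v_n \neq n]$, yielding the required identity. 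The only real obstacle I anticipate is spotting the two containments $A_v \subseteq \Des(v)$ and $B_v \cap \Des(v) = \emptyset$; once these are in hand, they explain precisely why the condition $\Des(v)$ in Lemma~\ref{edgeQ} is attached only to the $C_v$ block, and the remaining counting is an easy telescoping.
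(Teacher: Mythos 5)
Your proposal is correct and follows essentially the same route as the paper: both proofs start from the count of incoming edges supplied by Lemma~\ref{edgeQ} and rest on the observations that $A_v \subseteq \Des(v)$ and $B_v \cap \Des(v) = \emptyset$, reducing everything to the identity $|A_v| = |B_v| + [v_n \neq n]$. The only difference is cosmetic: the paper closes this gap by exhibiting a bijection matching $i \in A_v$ to $\min\{j \in B_v : j > i\}$ (with the type-two edge absorbing the leftover element when $v_n \neq n$), whereas you obtain the same count by telescoping the indicator of $\Rexc(v)$ — a cleaner formulation of the same alternating-sequence fact.
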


\begin{proof}First, notice that if $i\in B_v$,
then $i\notin\Des(v)$; and if $j\in A_v$, then $j\in \Des(v)$. So
$\Des(v)=A_v\cup (C_v\cap \Des(v))$.  Now we will define a bijection
between the set $\Des(v)$ and the set of incoming edges of $v$ as
listed in Lemma \ref{edgeQ}. First notice that $i\in \Des(v)\cap
C_v$ corresponds to an incoming edge $e_i$ described in  case $(b)$
of Lemma \ref{edgeQ}. Then we need to match $A_v$ with the set of
incoming edges in Lemma \ref{edgeQ}, part 1 (a) and part 2. There
are two cases:
\begin{enumerate}
\item If $v_{n}= n$, by Lemma \ref{edgeQ}, $v$ does not have a type two incoming edge.
Then we have a bijection between the sets $A_v$ and $B_v$ by
matching $i\in A$ to $\min\{j\in B\mid j>i\}$. For example,
$v=54\dot{1}\dot{2}6\dot{3}8\dot{7}9$ where the dotted positions are
in $\Rexc(v)$. Then $A_v=\{2,5,7\}$ is in bijection with
$B_v=\{4,8,6\}$. This gives us the desired bijection since the set
of $i$'s such that $e_i$  is a type one incoming edge of case 1 (a)
is exactly $B_v$.

\item If $v_{n}\neq n$, $A_v$ has one element more than $B_v$, since the largest number
in $A_v$ does not have image in $B_v$. But since in this case, $v$
has a type two incoming edge by Lemma \ref{edgeQ}, the extra descent
can be taken care by this incoming edge.\qedhere
\end{enumerate}

\end{proof}

\subsection{Acyclicity}
We want to show that the digraph defined in the previous subsection gives a shelling order. First, we need to show that any linear extension of the above ordering is well defined, i.e., there is no cycle in the directed graph $S_{k+1,n+1}$ (equivalently, $Q_{k+1,n+1}$ is acyclic). In this section, we restrict to the connected component of a small
block of $Q_{k+1,n+1}$, i.e., the subgraph of $Q_{k+1,n+1}$ consisting of permutations
with the same $\Rexc$, or equivalently, the subgraph of $P_{k+1,n+1}$
consisting of permutations with the same $\LdDes$. By Lemma
\ref{desld} and Lemma \ref{edgeQ}, $e_i=(u,v)\in Q_{k+1,n+1}$ with $u$ and $v$
in the same small block if and only $i\in C_u$, where $C_u$ is
defined in (\ref{C}). We want to show that there is no directed
cycle in each small block of $Q_{k+1,n+1}$.

For a permutation $w$, let $t_i(w)$ be the permutation obtained by
switching letters $i$ and $i+1$ in $w$, and $s_i(w)$ be the
permutation obtained by switching letters in positions $i$ and
$i+1$. Now consider $e_i=(u,v)\in Q_{k+1,n+1}$ and the corresponding edge
$(\hat{u},\hat{v})\in P$. By definition of $P_{k+1,n+1}$, we have
$\hat{u}=t_i\hat{v}$. Then in $Q_{k+1,n+1}$, we have:

\begin{lemma}\label{single}Let  $e_i=(u,v)\in Q_{k+1,n+1}$ and $i\in C_u$. Then
$$u=\begin{cases}
s_i(v), & \CT(u)\neq \CT(v)\\
t_is_i(v), & \CT(u)= \CT(v)
\end{cases},
$$
where $\CT(w)$ stands for the cycle type of $w$ defined in Section
1.3.
\end{lemma}
\begin{proof}In $P_{k+1,n+1}$, we have $\hat{u}=\dots i\dots (i+1)\dots$ and  $\hat{v}=\dots (i+1)\dots
i\dots$. By the inverse Foata map and  the condition that $u$ and
$v$ are in the same small block, i.e., $\LdDes(u)=\LdDes(v)$, we can
see that the only case when $\CT(u)\neq \CT(v)$ is
$u=\dots(i\dots)((i+1)\dots)\dots$ and $v=\dots((i+1)\dots
i\dots)\dots$ (in standard cycle notation). Then the conclusion
follows from the inverse Foata map.
\end{proof}
\begin{example}Consider $u=4321\in Q_{3,5}$ with standard cycle notation
 $u=(32)(41)$ in Example \ref{PQ}. For $e_3=(u,v)$ with $v=4312=(4231)$,
 since $\CT(u)\neq\CT(v)$, we have $u=s_3(v)$. For $e_1=(u,v')$ with $v'=3412=(31)(42)$,
 since $\CT(u)=\CT(v)$, we have $u=t_3s_3(v)$, i.e., $u$ is obtained from $v'$ by switching
 $3$ and $4$, which is $4312$, and then switching $v'_3$ and $v'_4$,
 which is $4321=u$.
\end{example}
For a permutation $w$, define its \emph{inversion set} to be
$\inv(w)=\{(w_i,w_j)\mid i<j,\,w_i>w_j\}$ and denote $\#\inv(w)$ by
$i(w)$. By Lemma \ref{single}, we have
\begin{corollary}\label{weak}For $e_i=(u,v)\in Q_{k+1,n+1}$  with $i\in\Des(u)$ and $i\in C_u$,
 we have $i\notin \Des(v)$ and $i(v)\le i(u)$.
\end{corollary}

  Now  consider
a sequence of edges $E$ in a small block of $Q_{k+1,n+1}$:
$u\leftarrow\cdots\leftarrow v$. By Corollary \ref{weak}, we have
$i(v)\le i(u)$. In order to show that there is no cycle in each
small block, we find an invariant that strictly decreases along any
directed path. We define the $E$-\emph{inversion set} to be
\begin{equation}\label{inve}
\inv_E(w)=\{(w_i,w_j)\in \inv(w)\mid \{e_i,\dots,e_{j-1}\}\subset
E\}
\end{equation}
 and claim that $i_E(w)=\#\inv_E(w)$
 is such an invariance (Lemma \ref{strict}).
\begin{example}For $w=361452798$ and $E=\{e_i\mid i\in\{2,3,5\}\}$, we first cut $w$ into blocks (indicated by lines):
$$w=\underline{3}\,\underline{614}\,\,\underline{52}\,\,\underline{7}\,\,\underline{9}\,\,\underline{8},$$
with the property that each block can be permutated arbitrarily by
$\{s_i\mid i\in \{2,3,5\}\}$. Then
$\inv_E(w)=\{(6,1),(6,4),(5,2)\}$, i.e., $(w_i,w_j)\in \inv(w)$ with
$w_i,w_j$ in the same block. This is the same as in (\ref{inve}).

Here are three extremal examples. If $E=\{e_i\mid i\in[n-1]\}$, then
$\inv_E(w)=\inv(w)$ for all $w\in \mathfrak{S}_n$. If $E=\{e_i\}$
and $e_i=u\leftarrow v\in Q$, then  $i_E(u)=1$ and $i_E(v)=0$. If
$E=\{e_i\mid i\in I\subset [n-1]\}$ and $i\notin \Des(w)$ for all
$i\in I$, then $i_E(w)=0$. This is the situation in Lemma
\ref{strict}.
\end{example}

\begin{lemma} \label{strict}
Let $u\leftarrow\cdots\leftarrow v$ be a sequence of edges in a
small block of $Q_{k+1,n+1}$ with edge set $E$. Then $i_E(v)<i_E(u)$.
\end{lemma}
\begin{proof}By Lemma \ref{single} and Lemma \ref{edgeQ}, Part 1,
 we have
 $i_E(v)\le i_E(u)$.
Suppose we have $i_E(v)= i_E(u)$. We will show that no edge can
belong to $E$.
First, we show that $e_{n-1}$ cannot be in $E$. Let $w$ be any
permutation in the above  path from $v$ to $u$. If $n-1\notin C_w$,
then certainly $e_{n-1}\notin E$. Suppose $n-1\in C_w$. Let
$\hat{w}'=t_{n-1}(\hat{w})$ in $P$. Notice that we always have
$\CT(w')\neq\CT(w)$. Then by Lemma \ref{single}, we have
$w'=s_i(w)$, and thus $i_E(w')<i_E(w)$. So $e_{n-1}\notin E$.

Now consider $u$. Notice that the last cycle of $u$ in its standard
cycle notation must start with $n$. Let the cycle be
$(n\,a_1\,a_2\cdots a_k)$. We claim that $e_{a_1}\notin E$. First,
for $u$, since $u_n=a_1$ and $e_{n-1}\notin E$, all pairs
$(u_i,u_n)$ are not in $\inv_E(u)$ by definition of $\inv_E$ in
(\ref{inve}). Let $\hat{u}'=t_{a_1}(\hat{u})$ in $P$. Independent of
the fact that $\CT(u)=\CT(u')$, we have $i_E(u')<i_E(u)$. Now
consider any $w$ appearing in the path from $v$ to $u$. Suppose all
edges before $w$ are not $e_{a_1}$. Then
 we still have $w_n=a_1$. Let $\hat{w}'=t_{a_1}(\hat{w})$. Then again consider
 both cases $\CT(w)=\CT(w')$ or not, by Lemma \ref{single},
 we have $i_E(w')<i_E(w)$. Therefore, $e_{a_1}\notin E$.

With the same argument, we can show that $e_{a_2}\notin E$, $\dots$,
$e_{a_k}\notin E$. Then we can move to the previous cycle, until we
have $e_i\notin E$ for all $i\in[n-1]$.
\end{proof}
\begin{corollary}\label{acyclic}$Q_{k+1,n+1}$ is acyclic.
\end{corollary}

\begin{proof}
 First it is not hard to see that there is no cycle that involves vertices
 in different small blocks, since both big blocks and
small blocks have the structure of a poset, and edges between two
small/big blocks all have the same direction. Therefore, if
$Q_{k+1,n+1}$ has a cycle, it has to be within a small block.

Suppose there is a directed cycle within a small block with edge set
$E$. Consider some $w$ in the cycle and let $w_1=w_2=w$ in Lemma
\ref{strict}, we will have $i_E(w)<i_E(w)$, a contradiction.
\end{proof}
\subsection{Shellable triangulation}
In this section, we will show that any linear extension of the
ordering of the simplices in $Q_{k+1,n+1}$ is shellable. We will prove this by
showing that each simplex has a unique minimal nonface (see Section
3.1).

 Let us first assign a face $F$ to each simplex. Each
incoming edge $\alpha\xleftarrow{e_i} \alpha_i$ defines a unique
vertex $M_i$ of $\alpha$ that $\alpha $ has but $\alpha_i$ does not
have. Then let $F=\{M_i\}$ be given by all the incoming edges of
$\alpha$. We want to show that $F$ is the unique minimal nonface of
$\alpha$. First, let us assume $F$ is a nonface, i.e., it has not
appeared before $\alpha$ in a given order of simplices. We can see
that $F$ is the unique minimal nonface, i.e., any proper subface of
$F$ has appeared before. In fact, let $M_i$ be a vertex in $F$ but
not in $F'\subset F$. Then we have $F'\subset \alpha_i$ since
$\alpha_i$ has every vertex of $\alpha$ except for $M_i$.

In the rest of this section, we will show that $F$ is a nonface. To
show this, let $Q_F$ be the (connected) component of $Q_{k+1,n+1}$ consisting
of all simplices containing $F$. Then it suffices to show that
$\alpha$ is the only source of $Q_F$, and any other simplices are
reachable by $\alpha$, i.e., there exists a directed path from
$\alpha$ to that simplex. We will first show this within each small
block and then connect different small blocks.

Let $Q_{F,s}$ be a (connected) component of $Q_{k+1,n+1}$ consisting of all
simplices in a small block $s$ containing $F$. In Section 5, we
define a ``vertex expression" for each simplex in $\Delta$. Let the
vertex expression of two simplices be $\alpha=M_1\dots M_{n+1}$ and
$\beta=M'_1 \dots  M'_{n+1}$. Assume $\alpha$ and $\beta$ are
connected by an edge $e_i$. Then by Corollary \ref{position},
$\alpha$ and $\beta$ differs only by the $(i+1)$th vertex, i.e.,
$M_{i+1}\neq M'_{i+1}$ and $M_j=M'_j$ for all $j\neq i+1$. Then it
follows  that there exists an edge set $E$ for $Q_{F,s}$, such that
$Q_{F,s}$ is closed under this edge set: if $\beta$ is connected to
$\alpha$ by an edge $e\in E$ and $\alpha\in Q_{F,s}$, then $\beta\in
Q_{F,s}$. In fact, let $\alpha\in Q_{F,s}$ and say the vertices of
$F$ are in the positions $J\subset [n]$ of $\alpha$. Then we have
$E=\{e_i\mid i\notin J, \,i\in C_u, \text{ for any }u\in s\}$. To
show the nonface property for each small block (Corollary
\ref{smallshelling}), we need the following lemma about the Foata
map.

\begin{lemma}\label{pureperm}
 Let $I\subset\{1,2,\dots,n-1\}$. For a permutation $w\in\mathfrak{S}_n$, consider the set $E(w)$ of all the permutations
obtained by applying any sequence of $t_i$, $(i\in I)$ to $w$, i.e.,
$$E(w)=\{u=t_{i_1}\dots t_{i_k}(w)\mid i_j\in I \text{ for some }k\}.$$ Then there exists a unique $u\in E(w)$ such that $F^{-1}(u)$
has ascents in $I$.
\end{lemma}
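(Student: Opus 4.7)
The plan is to exhibit the unique $u$ explicitly and verify it has the required property. Observe that $E(w)$ is the orbit of $w$ under the parabolic subgroup $W_I=\langle(i,i+1):i\in I\rangle\subset\mathfrak{S}_n$ acting by value-swaps. Decompose $I$ into maximal runs of consecutive integers; each run $\{a,\ldots,b\}$ gives a value-block $V=\{a,\ldots,b+1\}$, and each $u\in E(w)$ is obtained from $w$ by permuting the values of $V$ among the fixed set of positions $P_V=\{p_1<\cdots<p_{|V|}\}$ they occupy in $w$. My candidate for the unique valid $u$ is the ``sorted'' element $u^\ast\in E(w)$ defined by $u^\ast_{p_j}=a+j-1$ for each value-block $V$: within each value-block the values appear in increasing order at their $P_V$-positions.

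A key preliminary is that for any value-block $V$ and any position $q\notin P_V$, the value $u_q$ lies outside $V=[a,b+1]$, and a direct check in the two subcases ($u_q\ge b+2$ and $u_q\le a-1$) shows that whether $q$ is a left-to-right maximum of $u$ is independent of how the values of $V$ are arranged at $P_V$. This reduces the analysis to a single value-block at a time.

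For existence, fix one value-block $V$ and consider $u^\ast$. A $V$-position $p_j$ is an L-R maximum of $u^\ast$ iff no value $\ge b+2$ precedes $p_j$ in $u^\ast$, so the L-R maximum $V$-positions form a prefix $p_1,\ldots,p_m$ of $P_V$ (with $m=|V|$ if no value $\ge b+2$ appears in $u^\ast$, else $m$ is determined by the first such value). The cycles of $\pi^\ast=F^{-1}(u^\ast)$ are precisely the maximal runs of consecutive positions starting at an L-R maximum, with $\pi^\ast(u^\ast_p)=u^\ast_{p+1}$ unless $p$ ends its cycle (in which case $\pi^\ast(u^\ast_p)$ is the L-R maximum starting that cycle). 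Using this, I would verify $\pi^\ast(a+j-1)<\pi^\ast(a+j)$ for each $j\in\{1,\ldots,|V|-1\}$ by case analysis on whether $j,j+1\le m$ and on the immediate neighbors of $p_j$ and $p_{j+1}$.

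For uniqueness, given $u\in E(w)\setminus\{u^\ast\}$, let $i$ be the smallest index in $I$ for which the position of $i+1$ in $u$ precedes the position of $i$; such $i$ exists in some value-block where $u$ is out of sorted order. Writing $q<p$ for these positions with $u_q=i+1$ and $u_p=i$, the position $p$ is not an L-R maximum of $u$, and the minimality of $i$ constrains the structure between positions $q$ and $p$ enough to pin down $\pi(i)$ and $\pi(i+1)$ and conclude $\pi(i)>\pi(i+1)$, contradicting the ascent hypothesis at $i$. The main obstacle in both halves is the cycle bookkeeping, especially identifying the L-R maximum that starts the cycle containing a given non-maximal $V$-position (it can be either an earlier $V$-position in $\{p_1,\ldots,p_m\}$ or a non-$V$ position lying between two $V$-positions); but the prefix structure of L-R maxima within each $V$, combined with the minimality of $i$ in the uniqueness argument, should keep the sub-cases manageable.
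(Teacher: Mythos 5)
Your structural setup is sound---viewing $E(w)$ as the orbit of $w$ under a parabolic subgroup acting by value swaps, decomposing $I$ into maximal runs giving value-blocks $V=[a,b+1]$, and observing that rearranging values of $V$ at $P_V$ cannot change the left-to-right-maximum status of any position outside $P_V$ (so the ascent condition can be tested block by block). This matches the paper's framework. However, the explicit candidate $u^\ast$ is wrong, and the proof collapses there.

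Take $n=4$, $I=\{2\}$, $w=2314$. The single value-block is $V=\{2,3\}$ at positions $P_V=\{1,2\}$, and $E(w)=\{2314,\,3214\}$. Your $u^\ast$ puts the values in increasing order at $P_V$, giving $u^\ast=2314$. But $F^{-1}(2314)$ has cycle form $(2)(31)(4)$, i.e.\ $F^{-1}(2314)=3214$, which has a \emph{descent} at $2$. The unique valid element is in fact $u=3214$ (with $V$ in \emph{decreasing} order at $P_V$): $F^{-1}(3214)=(321)(4)=3124$, which ascends at $2$. The same failure occurs for larger examples such as $w=341562$, $I=\{3\}$. The reason is that whether the values of $V$ should be placed increasingly or otherwise at $P_V$ depends on where the positions $P_V$ fall in the cycle decomposition of the word (which $V$-positions start cycles, what letters follow them, etc.), not merely on the relative order of the positions. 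Your uniqueness argument inherits the same defect: for $w=2314$ above, the valid $u=3214$ is exactly the ``out of sorted order'' permutation your argument would purport to rule out, so the contradiction you aim for does not materialize.

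The paper instead determines the standardization of each block by an explicit algorithm: replace each value-region by a single letter, parenthesize the resulting word at its left-to-right maxima as in the inverse Foata map, and then assign actual values to a block's letters by comparing, for each occurrence, the letter it maps to in its cycle; the occurrence mapping to the largest target gets the largest value, and so on recursively, with a separate convention for periodic cycles. This is precisely the data your ``sorted'' rule discards. To repair your argument you would need to replace $u^\ast$ with a construction that is sensitive to the word-level cycle structure in this way.
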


\begin{proof}
We can describe an algorithm to determine this $u$ uniquely. First,
notice that the group generated by $t_i$, $(i\in I)$ is a subset of
the symmetric group $S_n$, and has the form $S_{a_1}\times
S_{a_2}\times \cdots\times S_{a_k}$, where $a=(a_1,a_2,\dots,a_k)$
is a composition of $n$. For example, if $n=9$, and
$I=\{2,3,5,7,8\}$, then $a=(1,3,2,3)$. A composition in $k$ parts
divides the numbers $1,2,\dots,n$ into $k$ parts, and numbers in
each region can be permuted freely by $t_i$, $(i\in I)$.

Now in the given $w$, replace numbers in each region by a letter and
order the letters by the linear order of the regions. In the
previous example, replace $\{1\}$, $\{2,3,4\}$, $\{5,6\}$ and
$\{7,8,9\}$ by $a,b,c,d$ respectively and we have the order
$a<b<c<d$. For example, if $w=253496187$, then we get a word
$bcbbdcadd$.

Next, add parentheses to the word in front of each left-to-right
maximum, as in the inverse Foata map. For $bcbbdcadd$, we have
$(b)(cbb)(dcadd)$. Notice that we do not have parentheses before the
second and third $d$. No matter how we standardize this word, the
cycles we get will be a refinement of the cycles for the word.

Now comes the most important part. We want to standardize the word
in a way such that $v=F^{-1}(w)$ is increasing in all positions of
$I$. To do this, we look at a letter in the word and compare it to the
next word it goes to in the cycle notation. For example, consider
the $b$'s in $(b_1)(cb_2b_3)(dcadd)$.  $v_{b_1}=b_1\in\{2,3,4\}$,
$v_{b_2}=b_3\in\{2,3,4\}$ and $v_{b_3}\in\{5,6\}$. Since
$v_{b_3}>v_{b_1}$ and $v_{b_2}$, to keep $v$ increasing in positions
$\{2,3,4\}$, we have $b_3>b_1$ and $b_2$, so $b_3=4$. Now continue
to compare  $b_1$ and $b_2$. Since $v_{b_2}=b_3>v_{b_1}=b_1$, we
have $b_1<b_2$, and thus $b_1=2$, $b_2=3$. Notice that if there are
no periodic cycles, then we can always choose a unique way to
standardize the letters to a permutation with the required property.
For a periodic cycle, there is still a unique way to standardize
them, which is to standardize each letter in the cycle increasingly.
For example, for $(baba)$, $(3142)$ is the unique way. This
completes the algorithm and proof.
\end{proof}

\begin{corollary}[small block shelling]\label{smallshelling}
   For any face $F\subset \Delta'$, if $Q_{F,s}\neq\emptyset$, then $Q_{F,s}$ has only one source and any other simplices are reachable by that source.
\end{corollary}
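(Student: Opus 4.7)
The plan is to pin down the unique source of $Q_{F,s}$ via Lemma~\ref{pureperm}, and then extract reachability from the acyclicity established in Corollary~\ref{acyclic}. The main obstacle is uniqueness of the source, which is precisely what Lemma~\ref{pureperm} was tailored to supply.

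First I would characterize sources. The edge set of $Q_{F,s}$ is $E = \{e_i : i \in I\}$, where $I \subset C$ is the subset of ``same-small-block'' swap indices not occupied by the positions of the vertices of $F$. By Definition~\ref{direct}(1)(b), a type one edge $e_i \colon u \to v$ within a small block is oriented so that $v_i > v_{i+1}$; hence the incoming $e_i$-edges at $v \in Q_{F,s}$ are indexed by $I \cap \Des(v)$, and $v$ is a source of $Q_{F,s}$ precisely when $v$ has an ascent at every position of $I$.

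Next I would apply Lemma~\ref{pureperm}. Fix any $w \in Q_{F,s}$ and set $\hat w = F(w) \in P$; the lemma produces a unique element of the orbit $E(\hat w)$ of $\hat w$ under the swaps $\{t_i : i \in I\}$ whose image under $F^{-1}$ is ascending at every $i \in I$. Call that image $v^\star$. Two compatibility checks remain: (i) $v^\star$ still lies in $s$, which holds because the restriction $I \subset C$ guarantees that each $t_i$ with $i \in I$ preserves $\LdDes$ in $P$, and therefore $\Rexc$ in $Q$; and (ii) $v^\star$ still contains $F$, which I would verify using the vertex-expression description of Section~5 (Corollary~\ref{position}): an edge $e_i$ alters only one designated vertex of the simplex, and by the definition of $I$ that vertex is never one of the vertices of $F$. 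Thus $v^\star$ is the unique source of $Q_{F,s}$.

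For reachability, starting from any simplex $w' \in Q_{F,s}$ I would repeatedly follow an incoming edge backwards. Acyclicity of $Q$ (Corollary~\ref{acyclic}) forbids revisits, and finiteness forces termination, necessarily at a source; uniqueness of the source then forces the terminus to be $v^\star$, so the reversed path exhibits $w'$ as reachable from $v^\star$. Modulo Lemma~\ref{pureperm}, the remaining bookkeeping is routine; the delicate point is really the interaction between the swap orbit and the inverse Foata map, which is precisely what lets uniqueness go through.
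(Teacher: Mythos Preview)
Your proposal is correct and follows essentially the same route as the paper: characterize sources of $Q_{F,s}$ as those $v$ with ascents at every index in the edge set $I$, invoke Lemma~\ref{pureperm} for uniqueness, and obtain reachability by tracing back along incoming edges using acyclicity and finiteness. The only cosmetic difference is that the paper cites Lemma~\ref{strict} directly for the no-cycle step while you cite its consequence Corollary~\ref{acyclic}, and you add the explicit checks that $v^\star$ stays in $s$ and still contains $F$, which the paper subsumes in the closure discussion preceding Lemma~\ref{pureperm}.
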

\begin{proof}
Let $E$ be the edge set corresponding to $Q_{F,s}$. By Lemma
\ref{edgeQ} part 1 (b), if $\alpha$ is a source in $Q_{F,s}$, then
$i_{E}(\alpha)=0$. First, by Lemma \ref{strict}, we know that there
exists at least one  such source. In fact, let $\alpha\in Q_{F,s}$.
If $i_{E}(\alpha)\neq0$, then by Lemma \ref{edgeQ} part 1 (b), we
can keep going along the incoming edges of $Q_{F,s}$. And since
there is no cycle within the small block and there are only finitely
many simplices in $Q_{F,s}$, we will reach a source.

Now by Lemma \ref{pureperm}, there is at most one source for
$Q_{F,s}$. Then the proposition is proved since the above ``tracing
back along arrows'' will guarantee that each simplex in $Q_{F,s}$ is
reachable by that unique source.
\end{proof}
\begin{theorem} \label{shelling}Any linear extension of the above defined
ordering between adjacent simplices will give a shelling order for
the half-open hypersimplex.
\end{theorem}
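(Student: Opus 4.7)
The plan is to verify two things about the orientation on $Q$ defined in Definition \ref{direct}: first, that it is acyclic, so that linear extensions exist; and second, that any such linear extension satisfies the shelling criterion, namely that for each simplex $\alpha_v$ the intersection $\alpha_v \cap \bigcup_{w<v}\alpha_w$ is the union of the $\des(v)$ facets of $\alpha_v$ corresponding to the incoming edges of $v$ (as counted in Theorem \ref{number}).

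For acyclicity, I would exploit the block order on small blocks from Definition \ref{bigsmall} together with Corollary \ref{edgechange}. Any directed cycle must lie inside a single small block, since edges of type $i=0$ and type one edges with $i \in B_v$ strictly increase the block in the defined order. The remaining edges are type one with $i \in C_v$; they stay inside a single small block and are oriented so that $v_i > v_{i+1}$. A suitable secondary statistic, such as the inversion count $\inv$ or a weighted variant adapted to the block structure, should strictly increase along each such edge, ruling out directed cycles.

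For the shelling criterion, I would invoke the equivalent ``unique minimal non-face'' characterization recalled in Section 1.2. For each $v$, I would identify the candidate minimal non-face as the subface of $\alpha_v$ spanned by the $\des(v)$ vertices of $\alpha_v$ whose opposite facets correspond to the incoming edges of $v$. The existence of those incoming edges guarantees that each such facet appears in an earlier simplex, since the source of each incoming edge precedes $v$ in the linear extension and already contains the shared facet. What remains is to show that no face of $\alpha_v$ outside this subface is contained in any earlier $\alpha_w$. Using the dictionary between vertices of simplices in the circuit triangulation of \cite{lp} and permutations in $Q$, I would verify this case by case according to the classification of incoming edges in Lemma \ref{edgeQ}, treating separately the types $i \in B_v$, $i \in C_v \cap \Des(v)$, and $i = 0$.

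The main obstacle will be this second step, especially the cases $i \in C_v$ and $i=0$. The acyclicity argument depends on choosing the right secondary statistic within each small block, and the shelling verification requires carefully unwinding the inverse Foata map to understand how the vertices in the circuit triangulation correspond to positions of the associated permutation. The combinatorial descriptions of $A_v$, $B_v$, and $C_v$ and the bijection used in the proof of Theorem \ref{number} should guide this analysis and ensure that the $\des(v)$ incoming facets are precisely the facets shared with earlier simplices, with no additional lower-dimensional intersections sneaking in.
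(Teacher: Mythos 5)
Your high-level plan matches the paper's: show the directed graph $Q$ is acyclic (so linear extensions exist), then show each simplex has a unique minimal nonface, namely the face $F$ spanned by the vertices $M_i$ opposite the facets shared with the sources of incoming edges. You also correctly observe that acyclicity reduces to small blocks via Corollary \ref{edgechange}. But the two steps you flag as ``the main obstacle'' are exactly where the real content of the paper's proof lives, and your proposal does not supply that content.

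For acyclicity within a small block, the plain inversion count will not work: Corollary \ref{weak} only gives $i(v)\le i(u)$ along an edge $e_i$ with $i\in C_u$, and equality can hold. The paper's fix is the edge-restricted invariant $\inv_E(w)$, which counts only inversions $(w_i,w_j)$ whose entire ``bridge'' $\{e_i,\dots,e_{j-1}\}$ lies in the edge set $E$ of the path under consideration. Lemma \ref{strict} then shows that $i_E$ strictly increases along directed paths, and the proof of that lemma is itself delicate (it uses the standard cycle structure of the source permutation to argue inductively that each $e_{a_j}\notin E$). Your ``weighted variant adapted to the block structure'' gestures at this but does not construct it, and without the precise definition the argument does not close.

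For the nonface step, identifying $F$ and noting that each facet opposite an $M_i$ appears in an earlier simplex is the easy half; the substantive claim is that $F$ itself has not appeared earlier, equivalently that $\alpha$ is the unique source of the subgraph $Q_F$ of simplices containing $F$ and that every other member of $Q_F$ is reachable from $\alpha$. The paper proves this in two layers: within a single small block it needs Lemma \ref{pureperm}, a nontrivial uniqueness statement about the inverse Foata map (there is a unique representative of an orbit under $\{t_i : i\in I\}$ whose image under $F^{-1}$ is ascending on $I$); and across small blocks it needs Proposition \ref{onesmall}, proved via the vertex-expression machinery of Section 5 (Lemma \ref{J_0}, Corollary \ref{position}), showing $Q_F$ has a unique minimal connected small block. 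Your proposed case analysis over $i\in B_v$, $i\in C_v\cap\Des(v)$, $i=0$ is a reasonable place to start, but these two lemmas are the actual engine and neither is sketched. As written, the proposal is a correct outline of the paper's strategy with the two load-bearing lemmas left as gaps.
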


\begin{proof} It suffices to show that for each
face $F$ in $\Delta'$,  $Q_F$ has only one source and any other
simplices are reachable by that source. First by Proposition
\ref{onesmall},  $Q_F$ starts with a unique minimal connected small
block. By Lemma \ref{edgeQ}, 1(b) and 2, each simplex in $\Delta'$
has an incoming edge from a simplex in a smaller small block.
Therefore, the source $\alpha_F$ in the unique minimal small block
of $Q_F$ is the unique source of $Q_F$, and each simplex in $Q_F$ is
reachable from $\alpha_F$ via the unique source in each $Q_{F,s}$.
\end{proof}

\section{Vertex expression for simplices in the triangulation}
Let $z_i=x_1+\dots+x_i$, we have an equivalent definition for
$\Delta_{k+1,n+1}$:
$$\Delta_{k+1,n+1}=\{(z_1,\dots,z_{n})\mid 0\le z_1,z_2-z_1,\dots,z_{n}-z_{n-1}\le
1;\,k\le z_{n}\le k+1\},$$
In this new coordinate system, the
  triangulation of $\Delta_{k+1,n+1}$ is called the alcoved triangulation  \cite{lp}.

 Now
all the integral points will be vertices of some simplex in the
triangulation. Denote the set of all the integral points in
$\Delta_{k+1,n+1}$ by $V_{k+1,n+1}=\{\Z^n\cap \Delta_{k+1,n+1}\}$ .
Now we define a partial order on   $V_{k+1,n+1}$ (we will drop the
indices from now on). For $M=(m_1,\dots,m_n),N=(m'_1,\dots,m'_n)\in
V$, we define $M>N$ if and only if $m_i\ge m'_i$ for $i=1,\dots,n$.
If $M=N+e_i$, where $e_i$ is the vector with 1 in the $i$th position
and 0 elsewhere, then label this edge in the Hasse diagram by
$n+1-i$. We still call the Hasse diagram of this poset on $V_{k+1,n+1}$ by $V_{k+1,n+1}$
itself. Here is an example of $V_{3,5}$.

\begin{center}
 $ \xy 0;/r.17pc/:
 (-10,30)*{A}="a"; (-25,30)*{(1,2,3,3)};
 (0,20)*{B}="b";(15,23)*{(1,2,2,3)};
 (10,10)*{C}="c";(25,13)*{(1,1,2,3)};
  (20,0)*{D}="d";(35,0)*{(0,1,2,3)};
 (0,0)*{F}="f";(-15,1.5)*{(1,1,2,2)};
 (-10,10)*{E}="e"; (-25,10)*{(1,2,2,2)};
  (-10,-10)*{G}="g";(-25,-10)*{(1,1,1,2)};
   (10,-10)*{H}="h";(25,-10)*{(0,1,2,2)};
    (0,-20)*{I}="i";(-15,-20)*{(0,1,1,2)};
     (10,-30)*{L}="l";(25,-30)*{(0,0,1,2)};
 "a"; "b"**\dir{-};
  ?*!/_2mm/{2};
 "b"; "c"**\dir{-};
  ?*!/_2mm/{3};
 "c"; "d"**\dir{-};
  ?*!/_2mm/{4};
 "e"; "f"**\dir{-};
  ?*!/_2mm/{3};
 "f"; "h"**\dir{-};
  ?*!/_2mm/{4};
 "g"; "i"**\dir{-};
  ?*!/_2mm/{4};
 "i"; "l"**\dir{-};
  ?*!/_2mm/{3};
 "e"; "b"**\dir{-};
  ?*!/_2mm/{1};
 "f"; "c"**\dir{-};
  ?*!/_2mm/{1};
 "h"; "d"**\dir{-};
  ?*!/_2mm/{1};
 "g"; "f"**\dir{-};
  ?*!/_2mm/{2};
 "i"; "h"**\dir{-};
  ?*!/_2mm/{2};
\endxy
$
\end{center}

\begin{lemma}\label{chain}$n+1$ points of $V_{k+1,n+1}$ form a simplex in the
triangulation of $\Delta_{k+1,n+1}$ if and only if these points form an
$n$-chain in the poset $V$ and the labels of edges are distinct.
Moveover, vertex expressions with the same starting letter will also
have the same ending letter.
\end{lemma}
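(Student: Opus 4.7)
\emph{Proof plan for Lemma \ref{chain}.} My plan is to construct the candidate simplex from a chain, verify it lands inside the alcoved triangulation of $\Delta$, and then match counts to secure the converse direction.

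Start with an $n$-chain $M_0 < M_1 < \cdots < M_n$ in $V$ with $M_j - M_{j-1} = e_{i_j}$ and pairwise distinct edge labels $n+1-i_1, \ldots, n+1-i_n$, so $(i_1,\ldots,i_n)$ is a permutation of $[n]$ and $M_n - M_0 = (1,\ldots,1)$. The convex hull is parametrized by
\[
\bigl\{\, M_0 + c_1 e_{i_1} + \cdots + c_n e_{i_n} \;:\; 1 \ge c_1 \ge c_2 \ge \cdots \ge c_n \ge 0 \,\bigr\}.
\]
Letting $\pi$ be the inverse of $j \mapsto i_j$, the $i$th coordinate of a point in this simplex is $z_i^{(M_0)} + c_{\pi(i)}$. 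The first step is to verify the defining inequalities of $\Delta_{k+1,n+1}$ throughout the simplex. Validity of the chain in $V$ forces $\pi$ to be compatible with $M_0$: if $z_i^{(M_0)} = z_{i-1}^{(M_0)}$, the chain must add $e_i$ before $e_{i-1}$ (otherwise $z_i < z_{i-1}$ occurs at an intermediate step), so $\pi(i) < \pi(i-1)$; symmetrically, $z_i^{(M_0)} = z_{i-1}^{(M_0)} + 1$ forces $\pi(i-1) < \pi(i)$. A short case check in each scenario yields $P_i - P_{i-1} \in [0,1]$ throughout, and $P_n = k + c_{\pi(n)} \in [k, k+1]$.

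Next I would show that this simplex is an individual alcove, i.e., its interior meets no defining hyperplane $z_j - z_i \in \Z$ of the alcoved triangulation \cite{lp}. In the interior, $1 > c_1 > c_2 > \cdots > c_n > 0$ is strict, hence $c_{\pi(j)} - c_{\pi(i)} \in (-1,1) \setminus \{0\}$ whenever $i \ne j$, making the fractional part of $P_j - P_i = (z_j^{(M_0)} - z_i^{(M_0)}) + (c_{\pi(j)} - c_{\pi(i)})$ nonzero. So the map chain $\mapsto$ simplex lands in the triangulation, giving the ``$\Leftarrow$'' direction.

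For the ``$\Rightarrow$'' direction, Theorem \ref{vol} counts the alcoves as $A_{k+1,n}$, and unimodularity forces each alcove to have $n+1$ lattice-point vertices. Either by a direct count of chains (conditioning on $M_0$ and using an Eulerian-type recursion) or by appealing to the Lam--Postnikov bijection with permutations $u \in \mathfrak{S}_n$ satisfying $\des(u^{-1}) = k$, the number of valid chains is also $A_{k+1,n}$. The constructed map is thus a bijection, giving both directions. The moreover statement is immediate: in any valid chain, $M_n - M_0 = e_{i_1} + \cdots + e_{i_n} = (1,\ldots,1)$, so $M_n$ is determined by $M_0$.

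The main obstacle is the case analysis in the first paragraph, where one must check that the chain's forced compatibility with $M_0$ really implies every inequality cutting out $\Delta$; the alcove verification is then a one-line observation about fractional parts, and the converse follows by counting.
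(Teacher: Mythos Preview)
Your proposal is correct, and it is considerably more thorough than the paper's own argument. The paper's proof of this lemma is a two-sentence sketch: it simply observes that from a starting lattice point one must add each $e_i$ exactly once to obtain a simplex of the alcoved triangulation, and therefore the endpoint is determined. In effect the paper treats the chain characterization as immediate from the description of type-$A$ alcoves in \cite{lp} and only spells out the ``moreover'' clause.

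Your route is genuinely different in that it is self-contained: you parametrize the convex hull of a chain, verify the defining inequalities of $\Delta_{k+1,n+1}$ via the compatibility conditions forced on $\pi$ by $M_0$, check directly that the interior misses every alcove hyperplane, and then obtain the converse by a counting/bijection argument rather than by quoting the alcove structure. What this buys you is independence from the finer structure theory in \cite{lp}; what the paper's approach buys is brevity, at the cost of deferring the real content to that reference. Both arrive at the same place.

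Two small remarks. First, your inequality check omits $0\le P_1\le 1$, but this follows immediately once you note (as you implicitly do for $z_n$) that the chain being in $V$ forces $z_1^{(M_0)}=0$. Second, for the counting step, the ``Lam--Postnikov bijection'' you invoke is essentially the content of Proposition~\ref{vertex} in this paper; that argument does not rely on Lemma~\ref{chain}, so there is no circularity, but your alternative of a direct Eulerian count conditioned on $M_0$ keeps the argument cleanly independent.
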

For example, $HFCBA$ is a simplex in $\Delta_{3,5}$, since the
labels along the path form a permutation $4132$.
\begin{proof}
Starting with a point in $V_{3,5}$, for example $H=(0,1,2,2)$, we need to
add one to each coordinate, in order to get a simplex. And it always
end up with $A=(1,2,3,3)$.
\end{proof}
For each simplex, we define its \emph{vertex expression} to be the
expression formed by its $n+1$ vertices (from small to large in the
poset $V_{k+1,n+1}$). For example, $HFCBA$ is a vertex expression.

We denote the set of all such simplices in their vertex expressions
by $L_{k+1,n+1}$, and denote the corresponding permutations read from the
paths of $V_{k+1,n+1}$ by $R'_{k+1,n+1}$. Since two simplices are adjacent if and only
if their vertices differ by one vertex, we can add a graph structure
on $L_{k+1,n+1}$ (and thus on $R'_{k+1,n+1}$): we connect two simplices if and only if
their vertex expressions differ by one vertex. For example, from
$L_{3,5}$, we get $R'_{3,5}$ by reading the labels of the
corresponding paths in $V_{3,5}$:
\begin{center}$L_{3,5}:$
$ \xy 0;/r.2pc/: (0,0)*{IHFCB}="a";
 (0,25)*{HDCBA}="b";
 (-10,10)*{HFCBA}="c";
 (10,10)*{IHDCB}="d";
 (-27,4)*{HFEBA}="e";
 (27,4)*{LIHDC}="f";
 (-14,-7)*{IHFEB}="g";
 (14,-7)*{LIHFC}="h";
 (0,-15)*{IGFCB}="i";
 (-16,-25)*{IGFEB}="j";
 (16,-25)*{LIGFC}="k";
   (-23,-16)*{}="gg";
 (23,-16)*{}="hh";
 (-24,-34)*{}="jj";
 (24,-34)*{}="kk";
 "b"; "c"**\dir{-};
 "b"; "d"**\dir{-};
 "c"; "e"**\dir{-};
 "c"; "a"**\dir{-};
 "d"; "a"**\dir{-};
 "d"; "f"**\dir{-};
 "e"; "g"**\dir{-};
 "a"; "g"**\dir{-};
 "a"; "h"**\dir{-};
 "f"; "h"**\dir{-};
 "g"; "j"**\dir{-};
 "a"; "i"**\dir{-};
 "h"; "k"**\dir{-};
 "i"; "j"**\dir{-};
 "i"; "k"**\dir{-};
   (-28,-11)*{}="gg";
 (28,-11)*{}="hh";
 (-30,-27)*{}="jj";
 (30,-27)*{}="kk";
  "jj";"j"**\dir{--};
  "hh";"h"**\dir{--};
 "kk";"k"**\dir{--};
   "gg";"g"**\dir{--};
\endxy
$ $\rightarrow R'_{3,5}:$ $ \xy 0;/r.2pc/: (0,0)*{2413}="a";
 (0,25)*{1432}="b";
 (-10,10)*{4132}="c";
 (10,10)*{2143}="d";
 (-27,4)*{4312}="e";
 (27,4)*{3214}="f";
 (-14,-7)*{2431}="g";
 (14,-7)*{3241}="h";
 (0,-15)*{4213}="i";
 (-16,-25)*{4231}="j";
 (16,-25)*{3421}="k";
  "b"; "c"**\dir{-};
 "b"; "d"**\dir{-};
 "c"; "e"**\dir{-};
 "c"; "a"**\dir{-};
 "d"; "a"**\dir{-};
 "d"; "f"**\dir{-};
 "e"; "g"**\dir{-};
 "a"; "g"**\dir{-};
 "a"; "h"**\dir{-};
 "f"; "h"**\dir{-};
 "g"; "j"**\dir{-};
 "a"; "i"**\dir{-};
 "h"; "k"**\dir{-};
 "i"; "j"**\dir{-};
 "i"; "k"**\dir{-};
   (-28,-11)*{}="gg";
 (28,-11)*{}="hh";
 (-30,-27)*{}="jj";
 (30,-27)*{}="kk";
  "jj";"j"**\dir{--};
  "hh";"h"**\dir{--};
 "kk";"k"**\dir{--};
   "gg";"g"**\dir{--};
\endxy
$
\end{center}
Notice that in $V_{3,5}$, since the vertices $E$, $F$, $H$, $G$,
$I$, $L$ have $z_4=2$, they lie on the lower facet of
$\Delta_{3,5}$. Therefore, we have a dotted line attached to each of
the simplices $IHFEB$, $LIHFC$, $LIGFC$ and $IGFEB$, indicating that
these simplices have a lower facet removed.

We have the following connections between the vertex expressions
(graph $L_{k+1,n+1}$ and $R'_{k+1,n+1}$) and the graphs $R_{k+1,n+1}$ (and $P_{k+1,n+1}$, $Q_{k+1,n+1}$) we studied in
Section 3. For example, compare $R'_{3,5}$ above with $R_{3,5}$ in
Section 3.
\begin{proposition}\label{vertex}$R'_{k+1,n+1}=R_{k+1,n+1}$.
\end{proposition}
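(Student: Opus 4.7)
The plan is to verify that the label-reading map from vertex expressions to permutations sends $L$ onto the graph $R$ described in Section 3, matching vertex sets and edges.

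For the vertex correspondence, given a chain $M_1 < \cdots < M_{n+1}$ in $V$ with $M_{i+1} = M_i + e_{j_i}$, I set $u_i = n+1-j_i$. The polytope constraints $0 \le x_i \le 1$ and $k \le z_n \le k+1$ force $z_1(M_1) = 0$, $z_1(M_{n+1}) = 1$, $z_n(M_1) = k$, $z_n(M_{n+1}) = k+1$, so $M_{n+1} - M_1 = (1, \dots, 1)$ and $u \in \mathfrak{S}_n$. For each $i \in [n-1]$, the constraint $z_{i+1} - z_i \in [0,1]$ maintained along the chain forces $z_{i+1}(M_1) - z_i(M_1) = 1$ precisely when coordinate $i$ is incremented before coordinate $i+1$, and $= 0$ otherwise. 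Summing gives $k = z_n(M_1) - z_1(M_1) = \#\{i : \text{coord } i \text{ before coord } i+1\}$. Translating via $u^{-1}$ and reindexing $m = n-i$, this becomes $u^{-1}(m+1) < u^{-1}(m)$, a descent of $u^{-1}$ at position $m$; hence $\des(u^{-1}) = k$, matching the vertex set of $R$. The construction is reversible (recovering $M_1$ from $u^{-1}$).

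For edges, two simplices in $L$ are adjacent iff they share $n$ of $n+1$ vertices; the differing vertex is either interior or an endpoint of the chain. In the interior case, write $M_{j+1} - M_{j-1} = e_p + e_q$; both possible intermediates $M_{j-1} + e_p$ and $M_{j-1} + e_q$ lie in $V$ iff $|p-q| \ne 1$ (otherwise exactly one order violates $z_{p+1}-z_p \in [0,1]$ or $z_{q+1}-z_q \in [0,1]$). The swap of $M_j$ exchanges the labels $u_{j-1}, u_j$, and $|p-q| \ne 1$ translates to $|u_{j-1} - u_j| \ne 1$, matching the type-one edges of $R$ from Proposition \ref{gamma}.

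In the endpoint case, say the shared facet is $\{M_1, \dots, M_n\}$. This chain uses every direction $e_j$ except one, $e_{j^*}$; the two simplices across the facet are $\alpha = M_1 \cdots M_n (M_n + e_{j^*})$ and $\beta = (M_1 - e_{j^*}) M_1 \cdots M_n$. Reading labels, $\beta$'s permutation is the right cyclic shift $v = u_n u_1 \cdots u_{n-1}$ of $\alpha$'s, matching the type-two description of Proposition \ref{gamma}. Feasibility of $M_1 - e_{j^*} \in V$ uses $z_1(M_1) = 0$ and $z_n(M_1) = k$ to rule out $j^* \in \{1, n\}$, equivalently $u_n \notin \{1, n\}$; the internal slope conditions needed follow automatically from $u_n \notin \{1, n\}$ via the descent analysis above. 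The symmetric analysis for the facet $\{M_2, \dots, M_{n+1}\}$ yields the opposite direction. The main obstacle is this endpoint case, requiring careful verification that the only two extensions of the shared facet are $M_n + e_{j^*}$ at the top and $M_1 - e_{j^*}$ at the bottom (no interior insertion is possible, since $M_n - M_1$ already uses every direction except $e_{j^*}$), that the resulting labels form the claimed cyclic shift, and that the feasibility slope conditions on $M_1$ follow automatically. Combining both cases gives $R' = R$ as graphs.
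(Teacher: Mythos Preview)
Your proof is correct and follows essentially the same approach as the paper's: you verify the vertex set matches by analyzing how the slope constraints on $M_1$ encode $\des(u^{-1})=k$, then split the edge analysis into the interior (type one) and endpoint (type two) cases. The only cosmetic difference is that for the endpoint case you remove the top vertex $M_{n+1}$ and extend downward, whereas the paper removes the bottom vertex $M_1$ and extends upward; these describe the same type-two edge from opposite ends, and your verification that the internal slope conditions at $M_1-e_{j^*}$ are automatic (because $e_{j^*}$ is the last direction added in $\alpha$) is a nice explicit check that the paper leaves implicit.
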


\begin{proof}
Since the permutations $r\in R_{k+1,n+1}$ are $\{r\in \mathfrak{S}_{n}\mid
\des(r^{-1})=k\}$, we first need to show that the permutations in
$R'_{k+1,n+1}$ have the same property. For a simplex $\alpha$, let $M_1\dots
M_{n+1}$ be its vertex expression, with $M_1=(m_1,\dots,m_n)$ and
$M_{n+1}=(m'_1,\dots,m'_n)=M_1+\sum_{i=1}^n e_i$. Let
$r'_{\alpha}=a_1a_2\dots a_n$ be the permutation in $R'_{k+1,n+1}$
corresponding to this simplex $\alpha$.  Then we have
$M_{i+1}=M_i+e_{n+1-a_i}$.

Because of the restriction that  $k\le z_n\le k+1$ and $0\le z_1\le
1$ for both $M_1$ and $M_{n+1}$, we have $m_1=0$ and $m_n=k$. By the
other restrictions that $0\le z_{i+1}-z_i\le 1$,  we need to go up
by 1 $k$ times from $m_1$ to $m_n$. So there exists a set
$I\subset[n]$ with $\#I=k$, such that $m_{i+1}=m_i+1$, for each
$i\in I$, and $m_{j+1}=m_{j}$ for  $j\in [n]\backslash I$. To keep
the above restrictions for each $M_i$, $i=1,\dots,n$, we need to add
$e_{i}$ before $e_{i+1}$ for $i\in I$, and add $e_j$  before
$e_{j+1}$ for $j\in [n]\backslash I$. Then by the way we defined
$r'_{\alpha}$, we have $\Des(r'^{-1}_{\alpha})=n+1-I$ and thus
$\des(r'^{-1}_{\alpha})=\#I=k$.

Now we want to show that the edges in the graph $R'_{k+1,n+1}$ are the same as
in $R_{k+1,n+1}$. Since each edge in $L_{k+1,n+1}$ corresponds to a vertex-exchange,
there are two types of edges in $L_{k+1,n+1}$.

First, exchange a vertex in the middle without touching the other
vertices. An edge in $L_{k+1,n+1}$ changing the $i$th vertex with $i\neq 1$
and $i\neq n+1$ corresponds to an edge in $R'_{k+1,n+1}$ exchanging the
$(i-1)$th and the $i$th letters of the permutation $r'\in R'_{k+1,n+1}$. By
the restrictions $0\le z_{j+1}-z_{j}\le 1$, we can make such a
change if and only $r'_{i-1}$ and $r'_i$ are not consecutive
numbers. Therefore, this edge is the  type one edge in $R_{k+1,n+1}$.

Second, remove the first vertex and attach to the end another
vertex. This edge in $L_{k+1,n+1}$ corresponds to the edge in $R'_{k+1,n+1}$ changing
$r'=a_1a_2\cdots a_n$ to $s'=a_2\cdots a_na_1$. We claim that  we
can make such a change if and only if $a_1\neq 1$ and $a_1\neq n$.
In fact, if $a_1=n$, then for the second vertex of the simplex
corresponding to $r'$, we have $z_1=1$. Since the vertex expression
of $s'$ is obtained from that of $r'$ by removing the first vertex
of $r'$ and attaching to the end another vertex, the first vertex of
$s'$ is the same as the second vertex of $r'$. So for the first
vertex of $s'$, we have $z_1=1$, but then we cannot add $e_1$ to
$s'$ any more, since we require $0\le z_1\le 1$; if $a_1=1$, then
$z_n=k+1$ for the first vertex of the simplex corresponding to $s'$,
so we cannot add $e_n$ to $s'$ any more, since we require $k \le
z_n\le k+1$. Therefore, this edge is the type two edge in $R_{k+1,n+1}$.
\end{proof}

\begin{corollary}\label{position}
\begin{enumerate}
\item
Two simplices are in the same big block if and only if the first
vertices in their vertex expression ($L_{k+1,n+1}$) is the same. This implies
that their last vertices are also the same.
\item
Two simplices only differ by the $(i+1)$th vertex in the vertex
expression, if and only if they are connected by an edge $e_i$.
\end{enumerate}
\end{corollary}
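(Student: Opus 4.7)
The plan is to derive both parts from the explicit dictionary between vertex expressions in $L$ and permutations in $R$ laid out in the proof of Proposition \ref{vertex}.

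For Part 1, I would note that the first vertex $M_1 = (m_1, \dots, m_n)$ of any simplex in $L$ is forced by the defining inequalities to satisfy $m_1 = 0$, $m_n = k$, and $m_{i+1} - m_i \in \{0,1\}$, and is therefore determined by the set $I = \{i : m_{i+1} > m_i\} \subset [n-1]$ with $|I| = k$. The proof of Proposition \ref{vertex} already records $\Des(r'^{-1}_\alpha) = n+1-I$, so the map $M_1 \mapsto \Des(r'^{-1}_\alpha)$ is a bijection between possible first vertices and $k$-subsets of $[n-1]$. Since, via the inversion map $R \to P$, the big blocks in $P$ (indexed by $\Des(\hat w)$) pull back to the $\Des(r'^{-1})$-fibers in $R$, two simplices share a big block if and only if they share the same $M_1$. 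The identity $M_{n+1} = M_1 + (1,1,\dots,1)$, which follows from the fact that the vertex expression uses each standard basis vector exactly once, then immediately gives the claim about last vertices.

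For Part 2 in the type one case ($i \neq 0$), I would invoke the proof of Proposition \ref{vertex} directly: changing the $j$th vertex of the vertex expression (for $j \in \{2,\dots,n\}$) is equivalent to swapping positions $j-1$ and $j$ of the associated permutation $r' \in R$. Inversion $R \to P$ carries this positional swap to a swap of the \emph{values} $j-1$ and $j$ in $\hat\alpha$, which by the description of $P$ is precisely the edge labeled $e_{j-1}$. Setting $i = j-1$ identifies changing the $(i+1)$th vertex with edge $e_i$. For the type two case ($i = 0$), I would check directly from the ``remove first, append last'' description that the first vertex $M_1$ of one simplex is exactly the vertex absent from the adjacent simplex, matching position $i+1 = 1$ under the reading that ``the $(i+1)$th vertex'' locates the unique non-shared vertex.

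The main subtlety I expect is precisely the type two case: after slot-by-slot alignment, the two vertex expressions in a type two edge differ in every position, because the removal-and-append operation shifts the whole expression by one index. Thus the pointwise reading ``$M_j = M'_j$ for $j \neq i+1$'' that is later invoked in Section 4.2 does not literally extend to type two edges. This causes no trouble for the shelling argument, however, since Corollary \ref{edgechange} shows that within a single small block only type one edges can appear, and the stronger pointwise form is only used in that setting.
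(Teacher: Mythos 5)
Your proof is correct and follows the route the paper implicitly intends by labeling this a corollary: both parts are read off directly from the vertex-expression-to-permutation dictionary constructed in the proof of Proposition~\ref{vertex} and transported across the isomorphisms $R \to P \to Q$. Your caveat about $i=0$ is a genuine and useful observation --- the pointwise slot-matching reading of part~2 does fail for a type-two edge, since remove-and-append shifts every slot by one, and you correctly locate why this is harmless downstream: the slot-matching form is only ever invoked within small blocks, where $e_0$ cannot appear by Corollary~\ref{edgechange}.
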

For $J\subset [n]$, we call $e_i$ a
 \emph{backward move} if $i\in J$ and $i+1\notin J$; and call it
 a \emph{forward move} if  $i\notin J$ and $i+1\in J$.
Let $t\in s_{I,J}$ for some $I\subset [n]$. When we apply $e_i$ to
$t$, we get a simplex in a
 smaller small block if $e_i$ is a backward move  and in a
 bigger small block if $e_i$ is a forward move. We call both
 backward and forward moves \emph{movable edges}.

For any face $F$ in $\Delta_{k+1,n+1}$, consider the subgraph of $Q_{k+1,n+1}$ with all
simplices containing $F$, denoted by $Q_F$, and its restriction to a
small block $s$, denoted by $Q_{F,s}$.
\begin{lemma}\label{J_0}For any connected small block $s$, $Q_{F,s}$ is
connected. In particular,  $Q_{F,s_{I,J_0}}$ is connected, where
$J_0=\{n-k+1,\dots,n\}$.
\end{lemma}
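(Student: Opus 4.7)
The plan is to translate the connectedness question into the vertex-expression language of Section 5 and reduce it to an orbit-connectedness statement, in the spirit of Lemma \ref{pureperm}.

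First, given a face $F$ and any simplex $\alpha \supset F$, I would record the positions $J_F \subset [n+1]$ that the vertices of $F$ occupy in the chain $M_1 < \dots < M_{n+1}$; because the chain order is inherited from the lattice partial order, $J_F$ is independent of the choice of $\alpha \supset F$. By Corollary \ref{position}(2), moving via $e_i$ changes only the $(i+1)$-th vertex of the vertex expression, so the edges of $Q_F$ are precisely those $e_i$ with $i+1 \notin J_F$, and in $Q_{F,s}$ we additionally require $i \in C$, where $C$ is attached to the common $\Rexc = J$ shared across $s$.

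Next, I would describe $Q_{F,s}$ combinatorially through the sequence of increments $a_1,\dots,a_n$ between consecutive vertices of the chain. The set $J_F$ partitions $[n]$ into gaps, and a simplex in $Q_{F,s}$ is specified by choosing, for each gap, an ordering of a prescribed multiset of increments compatible with $\Rexc = J$. The allowed moves swap $a_i$ and $a_{i+1}$ whenever $i \in C$ and $i+1 \notin J_F$, i.e., whenever the swap stays inside a single gap and preserves $\Rexc$. Given $\alpha, \beta \in Q_{F,s}$, I would build a path between them by adjusting one gap at a time: within a single gap, the allowed swaps generate the full symmetric group acting on the gap's increments, with the exception of swaps that cross an $\Rexc$-boundary. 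If an $\Rexc$-boundary falls inside a gap, the gap splits into two sub-orbits whose multisets are forced by $\Rexc = J$, so connectedness is preserved on each side.

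The main obstacle is verifying that the combination of the $J_F$ constraint and the $\Rexc$-boundary does not split the orbit any further than the above analysis predicts; this needs a careful case-check tracking which $i$ lie in $C$ along a proposed path. For the specific case $s = s_{I,J_0}$ with $J_0 = \{n-k+1,\dots,n\}$, one computes directly that $C = [n-1]\setminus\{n-k\}$, so the only potential obstruction is at the single position $n-k$, and the increment multisets on either side of $n-k$ are completely forced by $\Rexc = J_0$. This yields the connectedness of $Q_{F,s_{I,J_0}}$ immediately, and the general case follows by the same principle applied locally at each $\Rexc$-boundary inside a gap.
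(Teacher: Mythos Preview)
Your setup via vertex expressions and the identification of the relevant edge set $\{e_i : i \in C,\; i+1 \notin J_F\}$ is correct and matches the paper's framework. However, you miss the paper's key simplification for the first assertion. The lemma \emph{assumes} $s$ is connected; since every edge internal to a small block is of the form $e_i$ with $i \in C$ (non-movable), any two $t_1, t_2 \in s$ with vertex expressions $M_1\cdots M_{n+1}$ and $N_1 \cdots N_{n+1}$ must already satisfy $M_{i+1} = N_{i+1}$ for every movable index $i \notin C$. This single observation replaces your entire ``$\Rexc$-boundary'' analysis: once the movable positions are pinned, one takes a path from $t_1$ to $t_2$ using only edges $e_j$ at positions where the two chains differ; such a path automatically stays in $s$ (all such $j$ lie in $C$) and in $Q_F$ (all such positions are outside $J_F$, since $F \subset t_1 \cap t_2$).

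Your acknowledged gap --- the ``careful case-check tracking which $i$ lie in $C$'' --- is exactly where the connectedness hypothesis does the work for free. Without invoking it, your claim that an $\Rexc$-boundary inside a gap merely ``forces the multisets on either side'' is too weak: what you need is that the actual \emph{vertex} at each boundary position is the same in $t_1$ and $t_2$, not just the multiset of increments on each side, and that is precisely what connectedness of $s$ delivers. Your treatment of the special case $J_0$ (computing $C = [n-1]\setminus\{n-k\}$ and noting the two halves are independently determined) is correct and is essentially the paper's direct verification that $s_{I,J_0}$ is connected; but your final sentence, deducing the general connected-$s$ case ``by the same principle applied locally,'' does not go through as written.
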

\begin{proof}
For any two simplices $t_1,t_2\in Q_{F,s}$, let $t_1=M_1\dots
M_{n+1}$
 and $t_2=N_1\dots N_{n+1}$ be their vertex expressions. Since $s$ is connected,
 there exists a path from $t_1$ to $t_2$
without any movable edges. So $M_i=N_{i}$ for all movable edges
$e_{i}$. On the other hand, there exists a path from $t_1$ to $t_2$
using only edges $e_j$ where $M_j\neq N_j$, this path is in $Q_F$.
Since $j$ is not those movable edges, this path is also in $s$, and
thus $t_1$ to $t_2$ is connected by a path in $Q_{F,s}$.

 We only need to show that
 $s_{I,J_0}$ is connected, then by the first statement, $Q_{F,s_{I,J_0}}$ is
connected.

For any fixed big block $I$, each permutation $w\in P_{k+1,n+1}$ is obtained
by a set partition of $[n-k]$ and $J_0$ according to $I$, since
$I=\Des(w)$ and $J_0=\LdDes(w)$. For example, for $n=9$, $k=4$ and
$I=\{1,2,5,6\}$, each $w\in P$ is obtained as follows. We first
choose two from  $J_0=\{6,7,8,9\}$ to be $w_1w_2$ and the other two
to be $w_5w_6$. Within each of the two 2-blocks, numbers need to be
decreasing. Then choose two from  $\{1,2,3,4,5\}$ to be $w_3w_4$ and
the other three to be $w_7w_8w_9$. Within each block, numbers need
to be increasing. Then it is not hard to
 see that any two such permutations can be obtained from each other
without using an $e_{n-k}$-edge, so $s_{I,J_0}$  is connected.
\end{proof}

\begin{proposition}\label{onesmall}$Q_F$ starts with a unique minimal connected small
block.
\end{proposition}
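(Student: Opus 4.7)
My plan is to pin down the minimal small block of $Q_F$ directly via the vertex expressions of Section~5, and then combine Lemma~\ref{J_0} and Corollary~\ref{smallshelling} to read off connectedness and uniqueness of the source.

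First I would reduce the claim to locating sources. Starting from any $\alpha \in Q_F$, repeatedly follow an incoming edge of $Q$ that remains inside $Q_F$: by Corollary~\ref{position}, an incoming $e_i = u \to v$ stays in $Q_F$ exactly when the $(i+1)$th vertex of $v$ is not a vertex of $F$. Each such step moves weakly downward in the small-block order by Corollary~\ref{edgechange}, and the process must terminate by acyclicity (Corollary~\ref{acyclic}). Hence every source of $Q_F$ sits in a locally minimal small block relative to $Q_F$.

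Next I would identify that minimal block concretely. By the proof of Proposition~\ref{vertex}, the big block $I$ of a simplex $\alpha = M_1 \cdots M_{n+1}$ is determined by the increase set of $M_1$. The constraint $F \subset \alpha$ forces $N_\ell - \mathbf{1} \le M_1 \le N_1$ coordinate-wise, where $N_1 < \cdots < N_\ell$ are the vertices of $F$; among admissible integer staircases there is a unique coordinate-wise minimum $M_1^*$, defined greedily by $M_1^*[j] = \max(M_1^*[j-1],\, N_\ell[j] - 1)$, which pins down the unique minimal big block $I_0$. Within $b_{I_0}$, the minimal small block is $s_{I_0, J_0}$ with $J_0 = \{n-k+1, \ldots, n\}$, the coordinate-wise maximal $k$-subset; I would exhibit a member of $Q_F \cap s_{I_0, J_0}$ by taking $M_1 = M_1^*$ and ordering the chain labels so that $\hat{w}$ carries the largest $k$ values at its descent positions. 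Since $J_0$ has the special form of Lemma~\ref{J_0}, $s_{I_0, J_0}$ is connected, hence $Q_{F, s_{I_0, J_0}}$ is connected by that lemma, and Corollary~\ref{smallshelling} produces a unique source $\alpha_F$.

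Strict minimality then follows from the reduction in the first paragraph: any $\alpha \in Q_F$ outside $s_{I_0, J_0}$ admits an incoming edge in $Q_F$ that strictly decreases the small block --- an $e_0$-edge if $I \ne I_0$, or an $e_i$-edge with $i \in B_v$ if $I = I_0$ but $J \ne J_0$. The main obstacle is ensuring that these descent-reducing edges always remain inside $Q_F$, equivalently that the affected vertex slot is not locked by $F$. This reduces to a counting argument: $F$ fixes at most $\ell$ of the $n+1$ slots of any ambient simplex, and for $(I, J) \ne (I_0, J_0)$ the set of candidate reducing edges is large enough that at least one corresponds to a free (unfixed) slot, producing the needed incoming edge inside $Q_F$.
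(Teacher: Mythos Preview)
Your proposal has two real gaps.

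First, the assertion that the minimal small block of $Q_F$ is $s_{I_0, J_0}$ with $J_0 = \{n-k+1,\ldots,n\}$ is not justified. You say you would ``exhibit a member of $Q_F \cap s_{I_0, J_0}$ by taking $M_1 = M_1^*$ and ordering the chain labels so that $\hat{w}$ carries the largest $k$ values at its descent positions,'' but any chain from $M_1^*$ to $M_1^* + \mathbf{1}$ lying in $Q_F$ must pass through every vertex of $F$. This partitions the chain into segments whose label sets are prescribed (the coordinates that increase between consecutive vertices of $F$), so the permutation $r$---and hence $\hat{w}=r^{-1}$---is constrained: certain sets of values must occupy certain blocks of positions, with only their internal order free. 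You have not argued that these constraints are compatible with $\LdDes(\hat{w}) = J_0$, and there is no reason to expect them to be. The paper makes no such claim; Lemma~\ref{J_0} is invoked only to rule out the case $J=J'=J_0$ in the contradiction argument, not to identify the minimal block as $s_{I_0,J_0}$.

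Second, the closing ``counting argument'' is not an argument. You assert that for $(I,J)\neq(I_0,J_0)$ the set of candidate reducing edges is large enough that at least one avoids the $\ell$ slots locked by $F$, but you give no mechanism, and a naive count of $\ell$ against $n+1$ says nothing about which specific slots are locked versus which specific edges reduce. The paper handles exactly this point by working with \emph{two} simplices $t_1,t_2$ in putatively distinct minimal blocks and comparing their vertex expressions: if $I\neq I'$ are incomparable, it builds a chain through $t_1\cap t_2$ ending at the maximal common vertex, landing in a strictly smaller big block; if $I=I'$, it locates a backward-move index $i$ with $M_i\neq N_i$, which forces $M_i\notin F$ and hence the $e_i$-move stays inside $Q_F$. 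The two-simplex comparison is precisely what produces a free slot; a single-simplex cardinality bound does not.
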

\begin{proof}Suppose not. Let $t_1\in s_{I,J}$, $t_2\in s_{I',J'}$ in two
disconnected minimal small blocks in $Q_F$.
 Write them in vertex expression, we have  $t_1=M_1\dots M_{n+1}$ and $t_2=N_1\dots N_{n+1}$.

 If $I\neq I'$ and they are incomparable, then
there exists another simplex $t\in b_{I''}$ in $Q_F$ with $I''<I'$
and $I''<I$. In fact, looking at the poset $V_{k+1,n+1}$, both $t_1,t_2$ are
some $n+1$-chains in $V_{k+1,n+1}$, their common vertices contain $F$, and
they have different
 ending points $M_{n+1},N_{n+1}$. Let $E\in t_1\cap t_2$ be the maximal
 element of $ t_1\cap t_2$ in $V$, and let $t$ be the chain ending
 at $E$ and passing through $t_1\cap t_2$. Then $t$ has the desired
 property. So $t_1,t_2$ are not in  minimal small blocks.

Now we assume $I=I'$. If $J=J'$, then by Lemma \ref{J_0}, $J\neq
J_0$, so $J$ has a backward move. We can show that there exists a
backward move $i$ of $J$ such that $M_i\neq N_i$. First, it is easy
to see that there exists a movable edge $e_i$ such that $M_i\neq
N_i$, otherwise $s_{I,J}$ is connected. Then by symmetry, it is
impossible that all of these movable edges are forward moves. Then
let $t$ be the simplex obtained from $t_1$ by an $e_i$ move. Since
$M_i\neq N_i$, we have $M_i\notin F$. Therefore, $t\in Q_F$ and $t$
is in a smaller small block, which  contradicts the assumption that
$s_{I,J}$ is a minimal small block in $Q_F$.

Now assume $J\neq J'$ and they are incomparable. By Lemma
\ref{desld} (part two), we need to apply a sequence of moves to get
from  $s_{I,J}$ to $s_{I,J'}$. Since $J, J'$ are incomparable, there
exists a backward move for $J$, which is a necessary move from
$s_{I,J}$ to $s_{I,J'}$. It follows that there exists such a move
$e_i$ with $M_i\neq N_i$.  Then we can apply this move to $t_1$ and
get a smaller small block in $Q_F$ than $s_{I,J}$.

\end{proof}

\section{Proof of Theorem~\ref{cover}: second shelling}
We want to show that the $h^*$-polynomial of $\Delta'_{k+1,n+1}$ is also given by
$$\sum_{\substack{w\in\, \mathfrak{S}_{n}\\
\des(w)=k}}t^{\cover(w)},$$
we will define $\cover$ in a minute.
Compare this to Theorem~\ref{Shelling and Ehrhart polynomial}: if $\Delta'_{k+1,n+1}$ has a shellable unimodular triangulation $\Gamma_{k+1,n+1}$, then its $h^*$-polynomial is
$$\sum_{\alpha\in\,
\Gamma_{k+1,n+1}}t^{\#(\alpha)}.$$
Similar to Theorem~\ref{exc}, we will define shellable unimodular triangulation for $\Delta'_{k+1,n+1}$, but this shelling is different from the one we use for Theorem~\ref{exc}. Label each simplex $\alpha\in \Gamma_{k+1,n+1}$ by a permutation $w_{\alpha}\in \mathfrak{S}_n$ with $\des(w_{\alpha})=k$. Then show that $\#(\alpha)=\cover(w_{\alpha})$.

We start from the graph $\Gamma_{k+1,n+1}$ studied in Section 3.3. Define a
graph $M_{k+1,n+1}$  such that $v\in V(M_{k+1,n+1})$ if and only if $v^{-1}\in
V(\Gamma_{k+1,n+1})$ and $(u,v)\in E(M_{k+1,n+1})$ if and only if $(u^{-1},v^{-1})\in
E(\Gamma_{k+1,n+1})$. By Proposition \ref{gamma}, we have
$$V(M_{k+1,n+1})=\{w\in \mathfrak{S}_{n}\mid \des(w)=k\},$$ and
$(w,u)\in E(M_{k+1,n+1})$ if and only if $w$ and $u$ are related in one of the
following ways:
\begin{enumerate}
\item type one: exchanging the letters $i$ and $i+1$ if these two letters are not adjacent in $w$ and $u$
\item type two: one is obtained by subtracting 1 from each letter of the other
(1 becomes $n-1$).
\end{enumerate}
 Now we want to orient the edges of $M_{k+1,n+1}$ to make it a digraph. Consider $e=(w,u)\in E(M_{k+1,n+1})$.
\begin{enumerate}
\item if  $e$ is of type one, and $i$ is before $i+1$ in $w$, i.e.,
$\inv(w)=\inv(u)-1$, then orient the edge as $w\leftarrow u$.
\item if edge $(w,u)$ is of type two, and $v$ is obtained by subtracting 1 from each letter
of $u$ (1 becomes $n-1$), then orient the edge as $w\leftarrow u$.
\end{enumerate}
\begin{example}\label{order} Here is the directed graph $M_{3,5}$ for
$\Delta'_{3,5}$:
$$
\xy 0;/r.2pc/: (0,0)*{3142}="a";
 (0,25)*{3214}="b";
 (-10,10)*{4213}="c";
 (10,10)*{2143}="d";
 (-27,4)*{4312}="e";
 (27,4)*{1432}="f";
 (-14,-7)*{3241}="g";
 (14,-7)*{2431}="h";
 (0,-15)*{4132}="i";
 (-16,-25)*{4231}="j";
 (16,-25)*{3421}="k";
 (-28,-11)*{}="gg";
 (28,-11)*{}="hh";
 (-30,-27)*{}="jj";
 (30,-27)*{}="kk";
 {\ar "c";"b"};%
 {\ar "d";"b"};%
 {\ar "e";"c"};%
 {\ar "a";"c"};%
 {\ar "a";"d"};%
 {\ar "f";"d"};%
 {\ar "g";"e"};%
 {\ar "g";"a"};%
 {\ar "h";"a"};%
 {\ar "h";"f"};%
 {\ar "i";"a"};%
 {\ar "j";"g"};%
 {\ar "j";"i"};%
 {\ar "k";"i"};%
 {\ar "k";"h"};%
 {\ar@{~>} "jj";"j"};%
  {\ar@{~>} "hh";"h"};%
   {\ar@{~>} "kk";"k"};%
    {\ar@{~>} "gg";"g"};%
\endxy
$$
\end{example}
\begin{lemma}\label{acyc}There is no cycle in the directed graph
$M_{k+1,n+1}$.
\end{lemma}
\begin{proof} Let us call the subgraph of $M_{k+1,n+1}$ connected by only type one edges
 a component. Then there is no cycle involving type two edges
since they all point in the same direction from one component to
another. Then there is no cycle involving only type one edges
either, since the number of inversions decreases along the directed
path of type one edges.
\end{proof}

Therefore, $M_{k+1,n+1}$ defines a poset on $V(M_{k+1,n+1})$ and $M_{k+1,n+1}$ is the Hasse
diagraph of the poset, which we
 still denote as $M_{k+1,n+1}$. This poset can be seen as a variation
of the poset of the weak Bruhat order.

For an element in the poset $M_{k+1,n+1}$, the larger its rank is, the further
its corresponding simplex is from the origin. More precisely, notice
that each $v=(x_1,\dots,x_n)\in V_{k+1,n+1}=\Delta_{k+1,n+1}\cap\Z^n$ has
$|v|=\sum_{i=1}^n x_i=k$ or $k+1$. For $u\in M_{k+1,n+1}$, by which we mean
$u\in V(M_{k+1,n+1})$, define
$$A_u=\#\{v \text { is a vertex of the simplex } s_{u^{-1}}\mid
|v|=k+1\}.$$
\begin{proposition}Let $w>u$ in the above poset  $M_{k+1,n+1}$. Then $A_w\ge A_u$.
\end{proposition}
This proposition follows from the following lemma and the definition
of the two types of directed edges.
\begin{lemma}\label{increase}$A_u=u_{n}$.
\end{lemma}
\begin{proof}
Let $w=u^{-1}$ and use the notations in section 2. Vertices of
$s_{w}$ are  $\phi(v_i)$ for $i=0,\dots,n$. Since $v_0=(0,\dots,0)$,
by (\ref{stanmap}), $|\phi(v_0)|=k$, so $x_{n+1}=1$ for $\phi(v_0)$.
By Lemma \ref{move}, from $\phi(v_{n-u_n})$ to $\phi(v_{n-u_n+1})$,
$x_nx_{n+1}$ is changed from $01$ to $10$. Moreover $x_{n+1}=1$,
thus $|\phi(v_i)|=\sum_{j=1}^n x_j=k$ for $i=0,\dots,n-u_n$, and
$x_{n+1}=0$, thus $|\phi(v_i)|=k+1$  for $i=n-u_n+1,\dots,n$.
Therefore, there are $u_n$ vertices with $|\phi(v_i)|=k+1$, thus
$A_u=u_n$.
\end{proof}

We define \emph{cover} of a permutation $w\in M_{k+1,n+1}$ to be the number
of permutations $v\in M_{k+1,n+1}$ it covers, i.e., the number of incoming
edges of $w$ in the graph $M_{k+1,n+1}$. From the above definition, we have
the following, (in the half-open setting):
\begin{lemma}\label{defcover}
\begin{enumerate}
\item If $w_1=1$, then $\cover(w)=\#\{i\in[n-1]\mid
(w^{-1})_i+1<(w^{-1})_{i+1}\}$;
\item if $w_1\neq 1$, then
$\cover(w)=\#\{i\in[n-1]\mid (w^{-1})_i+1<(w^{-1})_{i+1} \}+1$.
\end{enumerate}
\end{lemma}
\begin{proof} The elements in $\{i\in[n-1]\mid
(w^{-1})_i+1<(w^{-1})_{i+1}\}$ correspond to the type one edges
pointing to $w$. So we need to show that $w$ has an incoming type
two edge in the graph for $\Delta'_{k,n}$ if and only if $w_1\neq
1$. Let $u$ be the permutation obtained by subtracting one from each
letter of $w$ (1 becomes $n-1$).
\begin{enumerate}
\item If  $w_1\neq 1$ and $w_{n-1}\neq 1$, then
$\des(u)=\des(w)$, so $u\in M_{k,n}$.
\item If $w_{n-1}=1$, then
$\des(u)=\des(w)-1$, so $u\in M_{k-1,n}$. Since we are considering
the half-open setting, this incoming edge is still in
$\Delta'_{k,n}$. This corresponds to the waved edges in the above
example of $\Delta'_{3,5}$.
\item If $w_1=1$, then $\des(u)=\des(w)+1$, so this edge is not
in $\Delta'_{k,n}$.\qedhere
\end{enumerate}
\end{proof}

Recall the graph $R_{k+1,n+1}$ defined in Section 4 is obtained by
$$M_{k+1,n+1}\xrightarrow{w^{-1}}\Gamma_{k+1,n+1}\xrightarrow{\rev}R_{k+1,n+1}.$$
By Proposition~\ref{vertex}, $R_{k+1,n+1}$ is also obtained from the $n-$chain
expression of each simplex in $\Delta_{k+1,n+1}$. We can describe the same orientation of edges $(u,w)$ in $R_{k+1,n+1}$ with
$n-$chain expression $u=L_1<\dots<L_{n+1}$ and
$w=I_1<\dots<I_{n+1}$:
\begin{enumerate}
\item type one edge $e_i$: if $u_{i}<u_{i+1}$, then $u\leftarrow w$.
We have $L_{i+1}\neq I_{i+1}$  with $\rank(L_{i+1})=\rank(I_{i+1})$
in the poset $V$ and $L_j=I_j$ for all $j\neq i+1$. $u_{i}<u_{i+1}$
if and only if the vector
$L_{i+1}=(z_1,\dots,z_n)<I_{i+1}=(z'_1,\dots,z'_n)$ in dominance
order, i.e., $z_n+\dots+z_{n-\ell}\ge z'_n+\dots+z'_{n-\ell}$ for
all $\ell$. Note that by definition, we have $z_n\ge
z_{n-1}\ge\dots\ge z_1$ and $z'_n\ge z'_{n-1}\ge\dots\ge z'_1$.
\item type two edge:  if  $w=u_2\dots u_{n}u_{1}$, then $w\leftarrow u$.
  This corresponds to the case $w=L_2<\dots<L_{n+1}<L_1$ in the poset $V_{k+1,n+1}$.
\end{enumerate}

With the above ordering on the $n$-chain expressions of simplices in
$\Delta_{k+1,n+1}$, we can prove the following:
\begin{proposition}\label{shelling2}Any
linear extension of the above ordering gives a shelling order on the
triangulation of $\Delta'_{k+1,n+1}$.
\end{proposition}

\begin{proof}We want to show that for any linear extension of the
order in $M_{k+1,n+1}$, every simplex has a unique minimal nonface (see
definitions in Section 2.3).

For each simplex $\alpha\in \Delta_{k+1,n+1}$, assign to it a face $F\subset
\alpha$ in the following way. Each incoming edge
$\alpha\xleftarrow{e_i} \alpha_i$ defines a unique vertex $L_i$ of
$\alpha$ that $\alpha $ has but $\alpha_i$ does not have. Then let
$F=\{L_i\}$ be given by all the incoming edges of $\alpha$. We want
to show that $F$ is the unique minimal face of $\alpha$ and it has
never appeared before in any linear extension of the ordering given
by the directed graph.

First, assume $F$ has never appeared before, then it is clear that
$F$ is the unique minimal face, i.e., any proper subface of $F$ has
appeared before. In fact, let $L_i$ be a vertex in $F$ but not in
$F'\subset F$. Then we have $F'\subset \alpha_i$ since $\alpha_i$
has every vertex of $\alpha$ except for $L_i$.

Now we will show that $F$ has never appeared before $\alpha$ in any
linear extension, i.e., for any other $\beta$ which also has $F$,
there exists a directed path from $\alpha$ to $\beta$. It suffices
to show the following: for any face $F\subset \Delta_{k+1,n+1}$, the component
$M_F$ of simplices containing $F$ has a unique source, and any other
simplex is reachable from that source (there exists a directed path
from $\alpha$ to $\beta$).

In $M_F$, let us first consider the subgraph of simplices starting
with the same letter, say $A$, denoted by $M_{F,A}$. We want to
prove that $M_{F,A}$ has a unique source, and any other simplex is
reachable from that source. By the description of edges in $M_{k+1,n+1}$, simplices in
$M_{F,A}$ are connected by type one edges. For any edge $e_i$ $
H=H_1\dots H_{n+1}\rightarrow W=W_1\dots W_{n+1}$, we have
 $i\neq 0,n$, $H_{i+1}\neq W_{i+1}$ and $H_j=W_j$ for all $j\neq
 i+1$. Now let $F\cup \{A,B\}=\{F_1<F_2\dots <F_{\ell}\}$ ordered as in the poset $V_{k+1,n+1}$.
 It is clear that all simplices $M_{F,A}$ are $(n+1)$-chain in the
 interval $[A,B]$, where $B=A+\sum_{i=1}^n e_i$ passing through $F_1,\dots,F_{\ell}$.
 Now order the letters of the same rank in each of the intervals $[F_i,F_{i+1}]$
 by dominance order. We claim that the unique source is the chain obtained by
 choosing the dominant maximal element in each rank. First, notice
 that in the interval $[F_i,F_{i+1}]$, if
 $\rank(A_1)=\rank(A_2)+1=k$ and both $A_1$ and $A_2$ are maximal in
 dominance order compared to other element in $[F_i,F_{i+1}]$ with
 ranks $k$ and $k-1$ respectively, then we have $A_1>A_2$. So the dominant maximal elements in each rank of $[F_i,F_{i+1}]$
 and $F\cup\{A,B\}$ form a chain. Moreover, for any other
 chain in $M_{F,A}$, we can apply a simple move to change one vertex to a larger
 element in dominant order until we reach the chain with dominant
 maximal in each rank. Then the reachability also follows.

Now consider the whole $M_F$. We claim that the ending point of the
source is the maximal element in $F$, denoted by $F_h$. Any chain
$\beta$ not ending with $F_{h}$ ends with some letter larger than
$F_{h}$ in the poset $V_{k+1,n+1}$, then by moving down steps, there exists a
simplex
 $\gamma\in M_{F,F'_{h}}$, where $F'_{h}=F_{h}-\sum_{i=1}^n e_i$ such that there is a directed path from
 $\gamma$ to $\beta$. We know that $M_{F,F'_{h}}$ has its unique
 source $\alpha$, which connects to $\gamma$ by a directed path
 towards $\gamma$. Thus we have a directed path from $\alpha$ to
 $\beta$ via $\gamma$.
\end{proof}

 It is clear that the shelling number of the simplex corresponding to $w$ is
$\cover(w)$. Then by Theorem \ref{Shelling and Ehrhart polynomial}
and Proposition \ref{shelling2}, we have a proof of Theorem~\ref{cover}. Combine the above with
Theorem \ref{exc}, we have an indirect proof
of Corollary~\ref{equal}.

We want a direct combinatorial proof, which will give another proof
of Theorem \ref{exc}, and help us find a colored version of
excedance by Theorem \ref{color} in the next section.


\section{The $h^*$-polynomial for generalized half-open hypersimplex}
We want to extend Theorem \ref{cover} to the hyperbox
$B=[0,a_1]\times \dots \times [0,a_n]$. Write $\alpha=(a_1,\dots,a_d)$
and define the generalized half-open hypersimplex as
\begin{equation}\label{general}
\Delta'_{k,\alpha}=\{(x_1,\dots,x_n)\mid 0\le x_i\le a_i;  k-1<x_1+\dots+x_n\le k\}.
\end{equation}
Note that the above polytope is a multi-hypersimplex studied in \cite{lp}. For a nonnegative integral vector $\beta=(b_1,\dots,b_n)$, let
$C_{\beta}=\beta+[0,1]^n$ be the cube translated from the unit cube
by the vector $\beta$. We call $\beta$  the \emph{color} of
$C_{\beta}$.

We extend the triangulation of the unit cube to $B$ by translation
and  assign to each simplex in $B$ a \emph{colored permutation}
$$w_{\beta}\in \mathfrak{S}_{\alpha}=\{w\in \mathfrak{S}_n\mid
b_i<a_i, i=1,\dots,n\}.$$

Let $F_i=\{x_i=0\}\cap[0,1]^n$ for $i=1,\dots,n$. Define the
 \emph{exposed facets} for the simplex $s_{u^{-1}}$ in $[0,1]^n$,
 with $u\in M$, to be
 $\Expose(u)=\{i\mid s_{u^{-1}}\cap F_i \text{ is a facet of }s_{u^{-1}}\}$.

 We can compute $\Expose(u)$ explicitly
 as follows
\begin{lemma}Set $u_0=0$. Then $\Expose(u)=\{i\in[n]\mid
u_{i-1}+1=u_i\}$.
\end{lemma}
\begin{proof} Denote $u^{-1}=w$. Let
$\phi(v_i)$, $i=0,\dots,n$ be the vertices of $s_w$. Then $i\in
\Expose(u)$ if and only if $x_i=0$ for $n$ vertices of $s_w$. By the
description of vertices of $s_w$ in Lemma \ref{move}, from
$\phi(v_{n-u_i})$ to $\phi(v_{n-u_i+1})$, we change $x_{i}x_{i+1}$
from $01$ to $10$; and from $\phi(v_{n-u_{i-1}})$ to
$\phi(v_{n-u_{i-1}+1})$, we change $x_{i-1}x_{i}$ from $01$ to $10$.
If $u_{i-1}+1=u_i$, we have $v_{n-u_{i-1}}=v_{n-u_i+1}$. Then 1 will
pass through $x_i$ quickly and thus $x_i=1$ for only one vertex
$\phi(v_{n-u_i+1})$ of $s_w$. Otherwise, $x_i=1$ for more than one
vertex.
\end{proof}
Now we want to extend the shelling on the unit cube to
 the larger rectangle. In this extension, $F_i$ will be removed from $C_\beta$ if $b_i\neq
0$. Therefore, for the simplex $s_{w_{\beta}}$, we will remove the
facet $F_i\cap s_{w_{\beta}}$ for each $i\in \Expose(w)\cap\{i\mid
b_i\neq 0\}$ as well as the $\cover(w_{\beta})$ facets for neighbors
within $C_{\beta}$. We call this set $\Expose(w)\cap\{i\mid b_i\neq
0\}$ the  \emph{colored exposed facet (cef)}, denoted by
$\cef(w_{\beta})$, for each colored permutation
$w_{\beta}=(w,\beta)$.

Based on the above extended shelling, with some modifications of
Proposition \ref{shelling2}, we can show that the above order is a
shelling order. We show the idea of the proof by the following
example.

\begin{example}Consider $\Delta'_{k,\alpha}$ for $\alpha=(1,2,2)$ and $k=3$. In $z$-coordinates, where $z_i=x_1+ \dots+x_i$,
we have
$$V_{3,\{1,2,2\}}=\{A(0,0,2),\,B(0,1,2),\,C(1,1,2),\,F(0,2,2),\,G(1,2,2),$$
$$D(0,1,3),\,E(1,1,3),\,H(0,2,3),\,I(1,2,3),\,L(1,3,3)\}.$$
Drawing them in the poset as described in Section 5, we have the
following poset on the left. The simplices in the triangulation of
$\Delta_{k,\alpha}$ are 3-chains of $V_{3,\{1,2,2\}}$ with distinct labels along
the chain. We draw these 3-chains on the right with an edge between
each pair of adjacent simplices.

If two simplices are in the same cube, then we orient the edges as
in Section 3. If not, then the arrow points to the one whose
permutation has fewer descents. With this extension, we can still
compare two simplices that only differ by the $(i+1)$th vertices
$L_{i+1}$ and $I_{i+1}$ by comparing $L_{i+1}$ and $I_{i+1}$ in the
dominance order. So the proof of Proposition \ref{shelling2} holds
for $\Delta_{k,\alpha}$ too.
\begin{center}
 $ \xy 0;/r.37pc/:
 (0,0)*{A}="a";
 (0,10)*{B}="b";
 (-13,15)*{C}="c";
  (10,15)*{D}="d";
 (0,25)*{F}="f";
 (-3,20)*{E}="e";
  (-13,30)*{G}="g";
   (10,30)*{H}="h";
    (-3,35)*{I}="i";
     (-3,45)*{L}="l";
 "a"; "b"**\dir{-};
  ?*!/_2mm/{2};
 "b"; "c"**\dir{-};
  ?*!/_2mm/{3};
 "b"; "d"**\dir{-};
  ?*!/_-2mm/{1};
 "c"; "e"**\dir{-};
  ?*!/_2mm/{1};
 "e"; "d"**\dir{-};
  ?*!/_2mm/{3};
 "g"; "c"**\dir{-};
  ?*!/_2mm/{2};
 "i"; "e"**\dir{-};
  ?*!/_2mm/{2};
 "h"; "d"**\dir{-};
  ?*!/_2mm/{2};
 "f"; "b"**\dir{--};
  ?*!/_-2mm/{2};
 "l"; "i"**\dir{-};
  ?*!/_2mm/{2};
 "g"; "i"**\dir{-};
  ?*!/_2mm/{1};
 "i"; "h"**\dir{-};
  ?*!/_2mm/{3};
  "f"; "g"**\dir{--};
  ?*!/_2mm/{3};
  "f"; "h"**\dir{--};
  ?*!/_-2mm/{1};
\endxy
\,\,\,$ $ \xy 0;/r.3pc/: (0,0)*{BCGI}="a";(0,-3)*{321};
 (-20,0)*{ABCE}="b";(-20,-3)*{231};
 (-30,10)*{ABDE}="c";(-33,7)*{213};
 (-10,10)*{BCEI}="d";(-10,7)*{312};
 (10,10)*{BFGI}="e";(10,7)*{231};
 (-20,20)*{BDEI}="f";(-20,17)*{132};
 (0,20)*{BFHI}="g";(0,17)*{213};
 (20,20)*{FGIL}="h";(20,17)*{312};
 (-10,30)*{BDHI}="i";(-10,27)*{123};
 (10,30)*{FHIL}="l";(10,27)*{132};
   (0,-10)*{}="aa";
 (-20,-10)*{}="bb";
 (10,0)*{}="ee";
 {\ar "a";"d"};%
 {\ar "a";"e"};%
  {\ar "b";"d"};%
   {\ar "b";"c"};%
    {\ar "c";"f"};%
     {\ar "d";"f"};%
      {\ar "e";"g"};%
       {\ar "e";"h"};%
        {\ar "f";"i"};%
         {\ar "g";"i"};%
          {\ar "g";"l"};%
          {\ar "h";"l"};%
 {\ar@{~>} "aa";"a"};%
  {\ar@{~>} "bb";"b"};%
   {\ar@{~>} "ee";"e"};%
\endxy
$
\end{center}
\end{example}

Then, by Theorem \ref{Shelling and Ehrhart polynomial} and the fact
that the shelling number for $w_{\beta}$ is
$\cover(w_{\beta})+\cef(w_{\beta})$, we have the following theorem.
\begin{theorem}\label{color}
The $h^*$-polynomial for $\Delta'_{k,\alpha}$ is $$\sum_{\substack{w_{\beta}\in \mathfrak{S}_{\alpha}\\
\des(w)+|\beta|=k-1}}t^{\cover(w_{\beta})+\cef(w_{\beta})}.$$

\end{theorem}
\begin{example}\label{tables}Consider $n=5$, $k=5$ and $\alpha=(1,2,2,4)$. We want
to compute the $h^*$-polynomial of $\Delta'_{5,(1,2,2,4)}$ by Theorem
\ref{color}, where the sum is over all $(w,\beta)$ with $w\in
\mathfrak{S}_4$, $\beta=(b_1,\dots,b_4)$ with $b_1=0$, $0\le b_2<2$,
$0\le b_3<2$, $0\le b_4<4$ and $\des(w)+|\beta|=4$.
\begin{enumerate}
\item If $\des(w)=0$, we have $w=1234$, and the color $\beta$ with
 $|\beta|=4$ is one of  $(0,0,1,3)$, $(0,1,0,3)$ and $(0,1,1,2)$.

 \begin{table}[ht]\label{0}
 \caption{$\des(w)=0$}
\centering 
\begin{tabular}{c|c|c|cc c} 
\hline 
$w$ & $\cover(w)$ & $\Expose(w)$ & $\cef(w_{(0,0,1,3)})$ & $\cef(w_{(0,1,0,3)})$ & $\cef(w_{(0,1,1,2)})$\\ [0.5ex] 
\hline\hline 
1234 & 0 & $\{1,2,3,4\}$ & 2 &2&3\\ \hline
\end{tabular}
\end{table}
From  Table \ref{0}, we have $\sum_{
\des(w)=0,\,|\beta|=4}t^{\cover(w_{\beta})+\cef(w_{\beta})}=2t^2+t^3.$

\item If $\des(w)=1$, the color $\beta$ with
 $|\beta|=3$ is one of  $(0,0,0,3)$, $(0,0,1,2)$, $(0,1,0,2)$ and $(0,1,1,1)$.
\begin{table}[ht]\label{1}
 \caption{$\des(w)=1$}
\centering 
\begin{tabular}{c|c|c|cc cc} 
\hline 
$w$ & $\cover(w)$ & $\Expose(w)$ & $\cef(w_{(0,0,0,3)})$ & $\cef(w_{(0,0,1,2)})$ & $\cef(w_{(0,1,0,2)})$ & $\cef(w_{(0,1,1,1)})$\\ [0.5ex] 
\hline 
\hline 1243 & 1 &$\{1,2\}$& 0 &0&1&1\\
\hline 1342 & 1 &$\{1,3\}$& 0 &1&0&1\\
\hline 1423 & 1 &$\{1,4\}$& 1 &1&1&1\\
\hline 2341 & 1 &$\{2,3\}$& 0 &1&1&2\\
\hline 3412 & 1 &$\{2,4\}$& 1 &1&2&2\\
\hline 4123 & 1 &$\{3,4\}$& 1 &2&1&2\\
\hline 1324 & 2 &$\{1\}$  & 0 &0&0&0\\
\hline 2314 & 2 &$\{2\}$  & 0 &0&1&1\\
\hline 3124 & 2 &$\{3\}$  & 0 &1&0&1\\
\hline 2134 & 2 &$\{4\}$  & 1 &1&1&1\\
\hline 2413 & 2 &$\{\}$   & 0 &0&0&0\\
\hline
\end{tabular}
\end{table}
From Table \ref{1}, we have $\sum_{
\des(w)=1,\,|\beta|=3}t^{\cover(w_{\beta})+\cef(w_{\beta})}=5t+26t^2+13t^3$.

\item If $\des(w)=2$,  the color $\beta$ with
 $|\beta|=2$ is one of  $(0,0,0,2)$, $(0,0,1,1)$, $(0,1,0,1)$ and $(0,1,1,0)$.
\begin{table}[ht]\label{2}
 \caption{$\des(w)=2$}
\centering 
\begin{tabular}{c|c|c|cc cc} 
\hline 
$w$ & $\cover(w)$ & $\Expose(w)$ & $\cef(w_{(0,0,0,2)})$ & $\cef(w_{(0,0,1,1)})$ & $\cef(w_{(0,1,0,1)})$ & $\cef(w_{(0,1,1,0)})$\\ [0.5ex] 
\hline 
\hline 1432 & 1 &$\{1\}$ & 0 &0&0&0\\
\hline 3421 & 1 &$\{2\}$ & 0 &0&1&1\\
\hline 4231 & 1 &$\{3\}$ & 0 &1&0&1\\
\hline 4312 & 1 &$\{4\}$ & 1 &1&1&0\\
\hline 2143 & 2 &$\{\}$  & 0 &0&0&0\\
\hline 2431 & 2 &$\{\}$  & 0 &0&0&0\\
\hline 3214 & 2 &$\{\}$  & 0 &0&0&0\\
\hline 3241 & 2 &$\{\}$  & 0 &0&0&0\\
\hline 4132 & 2 &$\{\}$  & 0 &0&0&0\\
\hline 4213 & 2 &$\{\}$  & 0 &0&0&0\\
\hline 3142 & 3 &$\{\}$  & 0 &0&0&0\\
\hline
\end{tabular}
\end{table}
From Table \ref{2}, we have $\sum_{
\des(w)=2,\,|\beta|=2}t^{\cover(w_{\beta})+\cef(w_{\beta})}=9t+31t^2+4t^3$.

\item If $\des(w)=3$, we have $w=4321$, and the color $\beta$ with
 $|\beta|=1$ is one of  $(0,1,0,0)$, $(0,0,1,0)$ and $(0,0,0,1)$.
 \begin{table}[ht]\label{3}
 \caption{$\des(w)=3$}
\centering 
\begin{tabular}{c|c|c|cc c} 
\hline 
$w$ & $\cover(w)$ & $\Expose(w)$ & $\cef(w_{(0,1,0,0)})$ & $\cef(w_{(0,0,1,0)})$ & $\cef(w_{(0,0,0,1)})$\\ [0.5ex] 
\hline\hline 
4321 & 1 & $\{\}$ & 0 &0&0\\ \hline
\end{tabular}
\end{table}
From  Table \ref{3}, we have $\sum_{
\des(w)=3,\,|\beta|=1}t^{\cover(w_{\beta})+\cef(w_{\beta})}=3t.$
\end{enumerate}
To sum up, the $h^*$-polynomial of $\Delta'_{5,(1,2,2,4)}$ is
$17t+59t^2+18t^3$.
\end{example}
\section{Some identities}
\begin{proposition}For any $k\in[n-1]$, we have
\begin{enumerate}
\item $\#\{w\in\mathfrak{S}_{n}\mid\exc(w)=k,\des(w)=1\}=\binom{n}{k+1}$.
\item $\{w\in\mathfrak{S}_{n}\mid
\des(w)=k,\cover(w)=1\}=\{w\in\mathfrak{S}_{n}\mid
\#\Expose(w)=n-(k+1)\}.$
\item $\#\{w\in\mathfrak{S}_{n}\mid
\des(w)=k,\cover(w)=1,\Expose(w)=S\}=1$, for any $S\subset[n]$ with
$|S|=n-(k+1)$.
\item $\#\{w\in\mathfrak{S}_{n}\mid
\des(w)=k,\cover(w)=1\}=\binom{n}{k+1}$.
\end{enumerate}
\end{proposition}

\begin{proof}\begin{enumerate}
\item Notice that if $i$ is an exceedance and $i+1$ is not, then $i$
is a descent. Since $\des(w)=1$, all exceedances are next to each
other. Let $i$ be the first exceedance. Then it suffices to choose
$i<w_i<w_{i+1}<\dots<w_{n-k+1}$ to determine $w$.
\item Let $i_0$ be the smallest $i$ such that $i\notin
\Expose(w)$. Notice that this $i_0$ will cause one $\cover$. In
fact, if $i_0=1$, then $w_1\neq 1$; if $i_0>1$, then $w_{i_0}-1$ is
before $w_{i_0}$ and they are not adjacent. Since $\cover(w)=1$,
after the $i_0$th position of $w$, there is no $j\dots(j+1)$. Then
it follows that for each $i\notin \Expose(w)$ with $i\neq i_0$,
$i-1$ is a descent of $w$. On the other hand, if $j\in \Expose(w)$,
$j-1$ is not a descent. Therefore, to make $\des(w)=k$, we need $k$
elements other than $i_0$ that are not in $\Expose(w)$.
\item Let $S=\{a_1,\dots,a_{k+1}\}$. It is easy to check that the
only $w$ satisfying the condition is the following: $w_1\dots
w_{a_1-1}=1 \dots (a_1-1)$,  $w_{a_1}>w_{a_2}>\dots>w_{a_{k+1}}$ and
$w_{j+1}=w_j+1$ for $j=a_i,a_i+1,\dots, a_{i+1}-2$ if
$a_{i+1}-a_i>1$ for $i=1,2,\dots,k+1$, where we set $a_{k+1}=n+1$.
For example, if $S=\{2,3,5,7\}$ for $n=9$, then $w=197856234$.
\item Follows from (2) and (3).\qedhere
\end{enumerate}
\end{proof}
\begin{proposition}For any $1<i<n$, we have
\begin{enumerate}
\item $\#\{w\in\mathfrak{S}_{n}\mid\exc(w)=1,\des(w)=k\}=\binom{n+1}{2k}$.
\item $\#\{w\in\mathfrak{S}_{n}\mid
\des(w)=1,\#\Expose(w)=n-2k \text{ or }n+1-2k\}=1$
\item $\{w\in\mathfrak{S}_{n}\mid
\des(w)=1,\#\Expose(w)=n-2k \text{ or
}n+1-2k\}\subset\{w\in\mathfrak{S}_{n}\mid \cover(w)=k\}$.
\item $\#\{w\in\mathfrak{S}_{n}\mid
\des(w)=1,\cover(w)=k\}=\binom{n}{2k}+\binom{n}{2k-1}=\binom{n+1}{2k}$.
\end{enumerate}
\end{proposition}
\begin{proof}
\begin{enumerate}
\item Let the unique exceedance be $i$ and assume $w_i=j>i$. First,
we have $w_{\ell}=\ell$ for $\ell<i$ and $\ell>j$, also $w_{\ell}\le
\ell$ for $i<\ell<j$. Now notice that if $i<\ell\in \Des(w)$, then
we must have $w_{\ell}=\ell$, otherwise, we cannot have $w_h\le h$
for all $i<h<\ell$. Then, we can show that a $2k$-subset
$\{i<i_1<j_1+1<i_2<j_2+1<\dots<i_{k-1}<j_{k-1}+1<j+1\}\subset[n+1]$
corresponds to a unique such permutation $w$ in the following way:
$w_{s}=s$ for $i_{\ell}\le s\le j_{\ell}$, for all $1\le \ell \le
k-1$ and then fill the gaps with the left numbers increasingly. We
see that $\Des(w)=\{i,j_1,j_2,\dots,j_{k-1}\}$. For example,
consider $\{2,3,4,6,8,9\}$ for $n=9$. First we have $w_1=1$,
$w_9=9$; then we have $w_2=8$, $w_3=3$, $w_6w_7=67$. Finally we fill
the positions $w_4,w_5,w_8$ with the rest of the numbers $2,4,5$,
and get $w=183246759$ with $\exc(w)=1$ and $\Des(w)=\{2,3,7\}$.
Conversely, it is easy to define a unique $2k$-subset as above for a
given $w$.
\item Let $[n]-\Expose(w)=\{i_1,\dots,i_{\ell}\}$, where $\ell=2k-1$
or $2k$. It is not very hard to see that in order to make sure
$\des(w)=1$, $w$ has to be the following one. Define $w_i=i$ for
$1\le i<i_1$. Then let $r=\lfloor\frac{\ell}{2}\rfloor$, define
$B_j=w_{i_j}\dots w_{i_{j+1}-1}$ for $1\le j\le r$ and
$A_j=w_{i_{r+j}}\dots w_{i_{r+j+1}-1}$ for $1\le j\le \ell-r$, where
we set $i_{\ell+1}=n+1$. Then we put numbers $i_1,i_1+1,\dots, n$
into the positions $A_1B_1A_2B_2\dots A_rB_r (A_{r+1})$
alternatively. For example,  Let $[n]-\Expose(w)=\{3,4,6,8,9\}$ with
$n=9$. Then $w=125893467$.
\item It is clear from the construction in (2), that $w$ has $k$
covers.
\item Follows from (2) and (3).\qedhere
\end{enumerate}
\end{proof}

See the relations between cover and  Exposed set shown in Tables
\ref{1} and \ref{2} for an example of the above two propositions.
\subsection*{Acknowledgements.}
 I thank my advisor Richard Stanley for introducing me to the problem
 and giving me help and encouragement, Ira Gessel for helpful communication about
the generating function proof,  Yan Zhang for a nice idea for the
proof of Lemma \ref{pureperm}, Dorian Croitoru and Steven Sam for
helpful discussions and reading the draft carefully. I am also very grateful to the anonymous reviewer for very helpful comments and instructions.

\bigskip

\filbreak \noindent Nan Li\\
Department of Mathematics\\
Massachusetts Institute of Technology\\
Cambridge, MA 02139\\
{\tt nan@math.mit.edu}

\end{document}